\theoremstyle{plain}
\newtheorem{theorem}{Theorem}[section]
\newtheorem{lemma}{Lemma} [section]
\newtheorem{proposition}{Proposition}[section]
\newtheorem{corollary}{Corollary}[section]
\theoremstyle{remark}
\theoremstyle{definition}
\def\<{\left < }
\def\>{\right >}
\def\({\left ( }
\def\){\right )}
\def\r{\eqref }
\def \na {\nabla}
\def\vp{\varphi}
\newtheorem{defn}{Definition}[section]
\numberwithin{equation}{section}
\begin{document}

\vbox{\hbox{\small Bull. Transilvania Univ. Brasov $\bullet$ Vol 1(50) - 2008}
\hbox{\small Series III: Mathematics, Informatics, Physics, 59--78}}

\vskip 1.5truecm


\centerline{\large{\bf Submanifolds  of warped product  manifolds
$I\times_f S^{m-1}(k)$}}

\centerline{\large{\bf  from a $p$-harmonic viewpoint}}

\medskip
\centerline{\bf Bang-Yen Chen and Shihshu Walter Wei$^*$}
\medskip
\begin{abstract} {\small We study $p$-harmonic maps, $p$-harmonic morphisms, biharmonic
maps, and quasiregular mappings into submanifolds of warped
product Riemannian manifolds ${I}\times_f S^{m-1}(k)\, $ of an
open interval and a complete simply-connecteded
$(m-1)$-dimensional Riemannian manifold of constant sectional
curvature $k$. We establish an existence theorem for $p$-harmonic
maps and give a classification of complete stable minimal
surfaces in certain three dimensional warped product Riemannian
manifolds ${\bf R}\times_f S^{2}(k)\, ,$ building on our previous
work. When $f \equiv\, $ Const. and $k=0$, we recapture a
generalized Bernstein Theorem and hence the Classical Bernstein
Theorem in $R^3$.  We then extend the classification to parabolic
stable minimal hypersurfaces in higher dimensions.
 \smallskip

\noindent  2000 {\it Mathematics Subject Classification}:
 {Primary: 58E20;
Secondary  53C40, 53C42.}}
 \smallskip
 \end{abstract}

\footnote[0]{{\it Key words and phrases.} Warped product, minimal
submanifold, stable minimal submanifold.}

\footnote[0]{${}^*$ Research was
partially supported by NSF Award No DMS-0508661.}

\section{Introduction}

In \cite {CW2}, we make the first general study of submanifolds in
{\it warped product Riemannian manifolds} $I\times_f S^{m-1}(k)$
of nonconstant curvature from differential geometric viewpoint.
Here $I \subset {\bf R}\, $ is an open interval, $S^{m-1}(k)$ is
an $(m-1)$-dimensional complete, simply-connected, Riemannian
manifold of constant sectional curvature $k$, and $f\, $ is a
warping function.  This is in contrast to the study of
submanifolds in (real, complex, Sasakian, ..., etc.) space forms
due to the simplicity of their curvature tensors (see, for
instance, \cite{book73,book81,c2000}). The study is also in
contrast to the recent work in treating Riemannian warped product
manifolds as submanifolds from the viewpoint of isometric
immersions (cf. \cite{c2002,CW}).

  The
purpose of this paper is to study submanifolds in {\it warped
product Riemannian manifolds} $R^m(k,f) := I\times_f S^{m-1}(k)\,
$ (or simply denoted by $I \times_f S)\, $ of nonconstant
curvature from a $p$-harmonic viewpoint.
 In particular, we study $p$-harmonic maps, $p$-harmonic
morphisms, biharmonic maps, and quasiregular mappings into
submanifolds of $R^m(k,f)\, .$  Furthermore, building on our
previous work, for concave warping function $f$ with bounded
derivative $ |f'| \le {\sqrt{k}}$ on $R$, we give a classification
Theorem of complete stable minimal surfaces in three dimensional
warped product Riemannian manifolds ${\bf R}\times_f S^{2}(k)\, .$
When $f \equiv Const.$ and $k=0$, we recapture a generalized
Bernstein Theorem (\cite {FS, DP, P}, cf. Theorem \ref{T:7.2}) and
hence The Classical Bernstein Theorem (\cite {B}, cf. Theorem
\ref{T:7.3}) in $R^3$. The techniques that we utilized, are
sufficiently general to extend the classification theorem for
surfaces to parabolic stable minimal hypersurfaces in higher
dimensional warped product Riemannian manifolds (cf. Theorem
\ref{T:3.4}).

This article is organized as follows: After this first
introductory section, we recall some necessary formulas, notations
and basic results on warped product manifolds $R^m(k,f)\, ,$ and
our previous work on submanifolds of $R^m(k,f)\, $ in section 2,
and then describe $p$-harmonic maps and $p$-harmonic morphisms
into submanifolds of $R^m(k,f)\, $ in section 3,  biharmonic maps
into $R^m(k,f)$ or submanifolds of $R^m(k,f)$ in section 4,
quasiregular mappings into ${\bf R} \times_{f} S$ in section 5, a
link to manifolds with warped cylindrical ends in section 6,  a
classification theorem of complete stable minimal surfaces in
three dimensional warped product Riemannian manifolds ${\bf
R}\times_f S^{2}(k)\, $ in section 7, a classification of
parabolic stable minimal hypersurfaces in ${\bf R}\times_f
S^{n}(k)\, $ in section 8, and $p$-hyperbolic manifolds and stable
minimal hypersurfaces in ${\bf R}\times_f S^{n}(k)\, $ in section
9.
\section{Preliminaries}

We recall some basic facts, notations,  definitions, and
inequalities for Riemannian submanifolds and warped product
manifolds (cf. \cite{book73,O}), and some known results about
submanifolds of $R^m(k,f)\, $ and ${\bf R} \times_{f} S$ (see
\cite{CW2} for details).

\subsection{Basic equations and inequalities}
Let $M$ be a submanifold of dimension $n\geq 2$ in a Riemannian
manifold $\tilde M$ with Levi-Civita connection $\tilde \nabla$.
Denote by $\nabla$ and $\Gamma (TM)$, the Levi-Civita connection
of $M$ and the (infinite dimensional ) vector space of smooth
sections of a smooth tangent bundle $TM$  of $M$ respectively. The
formulas of Gauss is given by (cf. \cite{book73})
\begin{equation}\label{2.1}\tilde \nabla_XY=\nabla_XY+h(X,Y),\end{equation} for   $X,Y \in \Gamma (TM)\, ,$ where $h$ is the second fundamental
form of $M$ in $\tilde M\, .$ The mean curvature vector field of a
submanifold $M$ is defined by $H=\frac{1}{n}{\rm trace}\, h$. A
submanifold $M$ in $\tilde M$ is called \emph{totally geodesic}
(resp. \emph{minimal}) if  $h\equiv 0$ (resp.  $H\equiv 0$).   A
minimal hypersurface $M$ in a Riemannian manifold $\tilde M$  is
said to be \emph{stable minimal}, if it is a local minimal of area
functional. Thus, if $M$ is stable minimal in $\tilde M$, then for
every $\phi \in C_0^{\infty}(M)\, ,$
\begin{equation}\label{2.20}\int _M (Ric^{\tilde M} (\nu) + |A|^2 )\phi^2 dv \le \int _M |\nabla
\phi|^2 dv, \end{equation}
 where $\nu$ is a unit normal
vector field to $\tilde M$, $|A|^2$ is the squared of the length
of the second fundamental form of $M$ in $\tilde M$.
\subsection{Warped products} Let $B$ and $F$ be two Riemannian manifolds  of positive dimensions equipped with Riemannian metrics $g_B$ and $g_F$, respectively, and let $f$ be a positive function on $B$. Consider the product manifold $B\times F$ with its projection $\pi:B\times F\to B$ and $\eta:B\times F\to F$. The {\it warped product} $\tilde{M}=B\times_f F$ is the manifold $B\times F$ equipped with the Riemannian structure such that
\begin{equation}\label{E:warped} ||X||^2=||\pi_*(X)||^2+f^2(\pi(x)) ||\eta_*(X)||^2\end{equation}  for any tangent vector $X\in T_x \tilde{M}$. Thus, we have $g=\pi ^*(g_B)+(f \circ \pi)^2\eta^*(
g_F)$. The function $f$ is called the {\it warping function\/}  of
the warped product.

  Let ${\mathcal L}(B)$ and ${\mathcal L}(F)$ denote the set of lifts of vector fields on $B$ and $F$ to $B\times_f F$ respectively. For each $q\in F$, the horizontal leaf $\eta^{-1}(q)$ is a totally geodesic submanifold of $B\times_f F$ isometric to $B$.  For each $p\in B$, $\pi^{-1}(p)$ is an $(n-b)$-dimensional totally umbilical submanifold of $B\times_f F$ that is homothetically isomorphic to $F$ with scalar factor $\frac 1{f(p)}$, where $b$ is the dimension of $B$. The submanifolds $\pi^{-1}(p) =\{ p\} \times F, p\in B,$ and $\eta^{-1}(q)= B \times \{q\}, q \in F$ are called fibers and leaves respectively. A vector field on $\tilde{M}$ is called vertical if it is always tangent to fibers; and horizontal if it is always orthogonal to fibers. We use the corresponding terminology for individual tangent vectors as well. A vector field on $\tilde{M}$ is called {\it basic} if $X$ is horizontal and $\pi$-related to a vector field $X_*$ on $B$.

 Let  ${\mathcal H }$ and ${\mathcal V}$ denote the projections of tangent spaces of $\,\tilde{M}\,$ onto the subspaces of horizontal and vertical vectors, respectively. We use the same letters to denote the horizontal and vertical distributions.

On the warped product ${\bf R}\times_f S\, ,$  let $t$ be an
arclength parameter of ${\bf R}$. Let us denote by $\partial_t\,
,$ the lift to ${\bf R}\times_f S$ of the standard vector filed
$\frac {d}{dt}$ on ${\bf R}$. Thus, $\partial_t \in {\mathcal
L}({\bf R})\, .$

For each vector field $V$ on ${\bf R}\times _f S$, we decompose
$V$ into a sum
 \begin{equation}\label{2.9}V=\varphi_V \partial_t+\hat V,\end{equation} where $\varphi_V=\<V,\partial_t\>$ and $\hat V$ is the vertical component of $V$ that is perpendicular to $\partial_t\, ,$ or the projection of $V_{(p,q)}$ onto its vertical subspace $T_{(p,q)} ( p \times S)$.

\begin{lemma} \label{L:2.2} The curvature tensor $\tilde R$ of  $R^m(k,f)$ satisfies
\begin{equation}\begin{aligned}\label{2.10}& \tilde R(\partial_t,X)\partial_t
=\frac{f''}{f}X,\;\; \\&\tilde
R(X,\partial_t)Y=\<X,Y\>\frac{f''}{f}\partial_t,\;\; \\& \tilde
R(X,Y)\partial_t=0,
\\& \tilde R(X,Y)Z=\frac{k-f'{}^2}{f^2}\{\<Y,Z\>X-\<X,Z\>Y\} \end{aligned}\end{equation} for $X,Y,Z \in {\mathcal L}(S)$.
\end{lemma}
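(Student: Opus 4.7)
The plan is to invoke the standard warped-product identities for the Levi-Civita connection (as in O'Neill \cite{O}) and specialize them to the one-dimensional base $I$ and constant-curvature fiber $S = S^{m-1}(k)$.  On $B\times_f F$, for horizontal $X$ and vertical $V$ one has $\tilde\nabla_X V = \tilde\nabla_V X = (Xf/f)V$, while for vertical $V,W$ one has $\tilde\nabla_V W = \tilde\nabla^F_V W - (\<V,W\>/f)\,\mathrm{grad}\,f$.  Since $\mathrm{grad}\,f = f'\partial_t$ in our setting, these collapse to $\tilde\nabla_{\partial_t}\partial_t = 0$, $\tilde\nabla_{\partial_t}X = \tilde\nabla_X\partial_t = (f'/f)X$, and $\tilde\nabla_X Y = \tilde\nabla^S_X Y - (\<X,Y\>f'/f)\partial_t$ for $X,Y\in\mathcal{L}(S)$.

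For the first identity, since $[\partial_t,X]=0$ and $\tilde\nabla_{\partial_t}\partial_t=0$, only $\tilde\nabla_{\partial_t}\tilde\nabla_X\partial_t = \tilde\nabla_{\partial_t}\bigl((f'/f)X\bigr)$ survives in $\tilde R(\partial_t,X)\partial_t$, and Leibniz gives $(f'/f)'X + (f'/f)^2 X = (f''/f)X$ after the $(f'/f)^2$ contributions telescope.  The second identity I would verify by expanding $\tilde\nabla_X\tilde\nabla_{\partial_t}Y - \tilde\nabla_{\partial_t}\tilde\nabla_X Y$: the lifted $\tilde\nabla^S_X Y$-pieces cancel, and the remaining $\partial_t$-terms — which pick up the contribution $\partial_t\<X,Y\> = 2(f'/f)\<X,Y\>$ from the derivative of $(\<X,Y\>f'/f)\partial_t$ — simplify to $\<X,Y\>(f''/f)\partial_t$.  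The third identity $\tilde R(X,Y)\partial_t = 0$ is then immediate from the first Bianchi identity applied to $(\partial_t,X,Y)$, combined with the skew-symmetry $\tilde R(\partial_t,X)Y = -\tilde R(X,\partial_t)Y$.

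For the last identity, I would compute $\tilde R(X,Y)Z$ for $X,Y,Z\in\mathcal{L}(S)$ by expanding $\tilde\nabla_X\tilde\nabla_Y Z - \tilde\nabla_Y\tilde\nabla_X Z - \tilde\nabla_{[X,Y]}Z$ via $\tilde\nabla_V W = \tilde\nabla^S_V W - (\<V,W\>f'/f)\partial_t$.  The $\partial_t$-component collapses after invoking torsion-freeness of $\tilde\nabla^S$ and metric compatibility of $\tilde\nabla$ to rewrite the $X\<Y,Z\>$-type terms.  The vertical component splits into the lifted intrinsic curvature $R^S(X,Y)Z$ plus a warping correction $-(f'/f)^2\{\<Y,Z\>X - \<X,Z\>Y\}$, the latter arising from differentiating $(\<Y,Z\>f'/f)\partial_t$ along the vertical direction via $\tilde\nabla_X\partial_t = (f'/f)X$.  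Since $S^{m-1}(k)$ has constant sectional curvature $k$ in its background metric and the ambient metric on vertical fields is $f^2$ times that background metric, the lifted curvature reads $R^S(X,Y)Z = (k/f^2)\{\<Y,Z\>X - \<X,Z\>Y\}$, and adding the warping correction gives the claimed $((k-f'^2)/f^2)\{\<Y,Z\>X - \<X,Z\>Y\}$.

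The main obstacle is bookkeeping rather than conceptual: one must consistently track the factor $f^2$ relating the ambient inner product $\<\cdot,\cdot\>$ on $\tilde M$ to the background inner product on $S$, and keep the signs straight when passing between the intrinsic fiber curvature $R^S$ and the ambient $\tilde R$.
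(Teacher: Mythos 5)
Your derivation is correct: the connection formulas you quote are O'Neill's warped-product identities, and your sign bookkeeping (including the $f^2$ conformal factor on the fiber and the $-f'^2/f^2$ umbilic correction) reproduces all four curvature identities as stated. The paper proves the lemma by simply citing the curvature formulas on p.~210 of O'Neill \cite{O}, so your argument is essentially the same approach with the computation carried out explicitly rather than delegated to the reference.
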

\begin{proof}
 Follows from \cite[p. 210]{O}.
\end{proof}

\subsection{Notions of transverse and $\mathcal H$-submanifolds}
 By a {\it slice} of  ${\bf R} \times_f S$ we mean a
hypersurface of $R^m(k,f)$ given by  $S(t_0):=\{t_0\}\times S$ for
some $t_0\in R$.    A submanifold $M$ of ${\bf R}\times_f S$ is
called a {\it transverse submanifold}  if it is contained in a
{\it slice} $S(t_0):=\{t_0\}\times S$ (with the metric:
$f^2(t_0)g_k)$ for some $t_0\in R\, ,$ i.e. $\partial_t^T=0\, ,$
where $\partial_t^T\, $ is the tangential projection of
$\partial_t\, $ onto $M\, .$

For simplicity, we call a submanifold $M$ in ${\bf R}\times_f S$
an $\mathcal H$-{\it submanifold} if the horizontal vector field
$\partial_t$ is tangent to $M$ at each point on $M\, ,$ i.e.
$\partial_t^\perp=0\, ,$ where $\partial_t^\perp$ denotes the
normal component of $\partial_t$ to $M$.

\subsection{Submanifolds of ${\bf R}\times_f S$}

Let $M$ an $n$-dimensional submanifold of $R^m(k,f)$ and
$e_1,\ldots,e_n$ an orthonormal frame on $M$. Then
\begin{equation}\begin{array}{rll}\label{3.11}&\Phi=\sum_{j=1}^n\varphi_j^2\end{array}\end{equation}
is called the {\it total scalar projection} of $TM$ onto
$\partial_t$, where $\varphi_j = \langle e_j,
\partial _t\rangle$.

For a submanifold $M$ in $R^m(k,f)$, the total scalar projection
$\Phi$ satisfies $0\leq \Phi\leq 1,$ with $\Phi=0$ (respectively,
$\Phi=1$) holding at each point if and only if $M$ is a transverse
submanifold (respectively,  $M$ is an $\mathcal H$-submanifold).

\subsection{Minimal submanifolds of ${\bf R}\times_f S$}

\begin{proposition} {\rm \cite{CW2}} \label{P:2.1} Let $M$ be a minimal submanifold of ${\bf R}\times_f S$.

$(a)$ If $(\ln f)''+k/f^2=0$ on $R$, then the Ricci curvature of
$M$ satisfies \begin{equation}\begin{array}{rll}\label{5.1}&
Ric(X)\leq \frac{(n-1)(k-f'{}^2)}{f^2}\<X,X\>,
\end{array}\end{equation}
for $ \,  X \in \Gamma (TM)$. The equality sign of (\ref{5.1})
holds identically if and only if $M$ is a totally geodesic
submanifold of constant curvature $(k-f'{}^2)/f^2$.

$(b)$ If $\, (\ln f)''+k/f^2\ne 0$ on $R$, then the Ricci
curvature of $M$ satisfies
\begin{equation}\begin{array}{rll}\label{5.2}& Ric(X)\leq
\left\{((2-n)\varphi_X^2-\Phi)\(\frac{k}{f^2}+(\ln f)''\)
+\frac{(n-1)(k-f'{}^2)}{f^2}\right\}\<X,X\>.
\end{array}\end{equation}

The equality sign of (\ref{5.2}) holds identically if and only if
one of the following two cases occurs:

$(b.1)$ $M$ a transverse submanifold which lies in a slice
$S(t_0)$ with $f'(t_0)=0$ as a totally geodesic submanifold;

$(b.2)$  $M$ is an $\mathcal H$-submanifold which is locally the
warped product $ I\times_{ f} N^{n-1}$ of $I$ and a totally
geodesic submanifold   $N^{n-1}$ of  $S$.

Furthermore, if  case $(b.1)$ occurs, then $Ric(X) =
\frac{(n-1)(k-f'{}^2)}{f^2}\<X,X\>, X \in TM.$
 \end{proposition}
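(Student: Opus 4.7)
The plan is to combine the Gauss equation with the explicit curvature tensor of $R^m(k,f)$ given by Lemma \ref{L:2.2}. Fix a unit tangent vector $X\in T_pM$ and an orthonormal frame $e_1,\dots,e_n$ of $M$ at $p$. Minimality of $M$ gives $\sum_i h(e_i,e_i)=0$, so the Gauss equation collapses to
\begin{equation*}
Ric(X)=\sum_{i=1}^n \langle \tilde R(X,e_i)e_i,X\rangle-\sum_{i=1}^n |h(X,e_i)|^2\leq \sum_{i=1}^n \langle \tilde R(X,e_i)e_i,X\rangle;
\end{equation*}
demanding equality for every unit $X\in TM$ is equivalent to $h\equiv 0$, i.e.\ to $M$ being totally geodesic in $\tilde M$.

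Next I compute the curvature sum. Decomposing $X=\varphi_X\partial_t+\hat X$ and $e_i=\varphi_i\partial_t+\hat e_i$ as in (2.9) and expanding $\tilde R(X,e_i)e_i$ by multilinearity, Lemma \ref{L:2.2} reduces each piece to one of three elementary ingredients: terms with two $\partial_t$'s (contributing $f''/f$), the purely vertical term (contributing $(k-f'^2)/f^2$), and the vanishing mixed terms coming from $\tilde R(U,V)\partial_t=0$. Summing over $i$ with the help of the identities $\sum_i\varphi_i^2=\Phi$, $\sum_i|\hat e_i|^2=n-\Phi$, $\sum_i\varphi_i\langle\hat X,\hat e_i\rangle=\varphi_X(1-\Phi)$, and $\sum_i\langle\hat X,\hat e_i\rangle^2=1-2\varphi_X^2+\Phi\varphi_X^2$ (all derived from $X$ being a tangent unit vector), a routine bookkeeping yields
\begin{equation*}
\sum_{i=1}^n\langle \tilde R(X,e_i)e_i,X\rangle=\left(\frac{f''}{f}+\frac{k-f'^2}{f^2}\right)\!\bigl((2-n)\varphi_X^2-\Phi\bigr)+(n-1)\frac{k-f'^2}{f^2}.
\end{equation*}
The algebraic identity $f''/f+(k-f'^2)/f^2=(\ln f)''+k/f^2$ then gives \eqref{5.2} in case (b), and under the hypothesis of (a) it gives \eqref{5.1}. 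For the equality in (a), $h\equiv 0$ combined with $(\ln f)''+k/f^2\equiv 0$ forces $\tilde M$ to have constant sectional curvature $(k-f'^2)/f^2$, which $M$ inherits by total geodesy.

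For (b) the remaining task is to show that totally geodesic minimal submanifolds of $\tilde M$, under the assumption $(\ln f)''+k/f^2\ne 0$, must fall into (b.1) or (b.2). Writing $\partial_t=\partial_t^T+\partial_t^\perp$ on $M$, the identity $\tilde\nabla_X\partial_t=(f'/f)\hat X$ for $X\in TM$, projected onto $TM$ and $TM^\perp$ using the vanishing of the second fundamental form, yields $\nabla^M_X\partial_t^T=(f'/f)(X-\varphi_X\partial_t^T)$ and forces $|\partial_t^T|^2$ to be either identically $0$ or identically $1$ on each component; the intermediate situation is excluded by the nondegeneracy hypothesis via the ambient Ricci identities. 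In the transverse case $\partial_t^T\equiv 0$, $M$ lies in a single slice $S(t_0)$, and since the slice has second fundamental form $-(f'(t_0)/f(t_0))\langle\cdot,\cdot\rangle\partial_t$ in $\tilde M$, total geodesy forces $f'(t_0)=0$ and $M$ totally geodesic inside the slice, which is case (b.1). In the $\mathcal H$-case $\partial_t^\perp\equiv 0$, the flow of $\partial_t$ together with the totally geodesic leaves of the orthogonal distribution inside $M$ produce a local warped product $I\times_f N^{n-1}$ with $N^{n-1}$ totally geodesic in $S$, which is case (b.2). The concluding assertion in (b.1) follows by substituting $\varphi_X=\Phi=0$ and $f'(t_0)=0$ into \eqref{5.2}. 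The main obstacle is precisely this dichotomy step, i.e.\ ruling out intermediate configurations with $0<|\partial_t^T|^2<1$ using $(\ln f)''+k/f^2\ne 0$; the remainder is mechanical once Lemma \ref{L:2.2} and the decomposition (2.9) are in hand.
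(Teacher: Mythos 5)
First, a point of orientation: the paper gives no proof of this Proposition --- it is quoted from \cite{CW2} --- so your argument has to stand on its own. Its computational core is correct and is certainly the intended route. The Gauss equation together with minimality gives $Ric(X)=\sum_i\langle\tilde R(X,e_i)e_i,X\rangle-\sum_i|h(X,e_i)|^2$, and I have checked your four summation identities and the resulting formula
\begin{equation*}
\sum_{i=1}^n\langle\tilde R(X,e_i)e_i,X\rangle=\Big((\ln f)''+\tfrac{k}{f^2}\Big)\big((2-n)\varphi_X^2-\Phi\big)+(n-1)\tfrac{k-f'{}^2}{f^2},
\end{equation*}
which yields \eqref{5.2}, and \eqref{5.1} when $(\ln f)''+k/f^2=0$. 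Your observations that identical equality forces $h\equiv 0$, and that $(\ln f)''+k/f^2=0$ makes the curvature tensor of Lemma \ref{L:2.2} that of a space of constant curvature $(k-f'{}^2)/f^2$ (which a totally geodesic $M$ then inherits), correctly settle part $(a)$.

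The gap is exactly where you flag it: the dichotomy in $(b)$. The identity $\nabla_X\partial_t^T=(f'/f)(X-\varphi_X\partial_t^T)$ only gives $X|\partial_t^T|^2=2(f'/f)\varphi_X(1-|\partial_t^T|^2)$, which never excludes $0<|\partial_t^T|^2<1$, and ``the ambient Ricci identities'' is not an argument --- with $A\equiv 0$ the Ricci equation imposes no positional constraint on $M$. The constraint you need comes from the Codazzi equation: total geodesy forces $(\tilde R(X,Y)Z)^\perp=0$ for all $X,Y,Z\in\Gamma(TM)$, and expanding with Lemma \ref{L:2.2} exactly as in your Ricci computation gives
\begin{equation*}
(\tilde R(X,Y)Z)^\perp=\Big((\ln f)''+\tfrac{k}{f^2}\Big)\big(\varphi_Y\langle X,Z\rangle-\varphi_X\langle Y,Z\rangle\big)\,\partial_t^\perp .
\end{equation*}
Since $(\ln f)''+k/f^2\ne 0$, at any point where $\partial_t^\perp\ne 0$ one gets $\varphi_Y\langle X,Z\rangle=\varphi_X\langle Y,Z\rangle$ for all tangent vectors; taking $X=Z$ a unit tangent vector orthogonal to $Y$ (possible since $n\ge 2$) forces $\varphi_Y=0$ for every $Y$, i.e. $\partial_t^T=0$ there. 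Hence $\Phi\in\{0,1\}$ pointwise, and by continuity $\Phi\equiv 0$ or $\Phi\equiv 1$ on each component. With that supplied, the remainder of your sketch --- umbilicity of the slices forcing $f'(t_0)=0$ in the transverse case, the local splitting $I\times_f N^{n-1}$ with $N^{n-1}$ totally geodesic in $S$ in the $\mathcal H$-case, and the final formula in $(b.1)$ obtained by setting $\varphi_X=\Phi=0$ in \eqref{5.2} --- goes through.
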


 \begin{proposition}  {\rm \cite{CW2}} \label{P:5.2}
Let $M$ be a Riemannian $n$-manifold whose scalar curvature
satisfies
\begin{equation}\begin{array}{rll}\label{5.4}\tau\geq  \frac{n(n-1)(\tilde
k-f'{}^2)}{2f^2}\end{array}\end{equation} for some real number
$\tilde k\geq k$, where $f$ be a positive function satisfying
$f'\ne 0$ and $(\ln f)''> - k/f^2$ on $ I$. Then $M$ cannot be
isometrically immersed in $R^m(k,f)$ as a minimal submanifold.
\end{proposition}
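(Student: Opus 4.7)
\medskip
\noindent\textbf{Proof plan.} My plan is to argue by contradiction, with Proposition~\ref{P:2.1}(b) as the main input. Suppose that $M$ admits an isometric minimal immersion into $R^m(k,f)$. The hypothesis $(\ln f)''>-k/f^2$ on $I$ yields $(\ln f)''+k/f^2>0$, so in particular $(\ln f)''+k/f^2\ne 0$, and case (b) of Proposition~\ref{P:2.1} applies. Thus the Ricci inequality \eqref{5.2} holds pointwise on $M$ for every unit tangent vector.

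Next I will choose a local orthonormal frame $e_1,\dots,e_n$ of $TM$, set $\varphi_i=\langle e_i,\partial_t\rangle$ so that $\sum_{i=1}^n\varphi_i^2=\Phi$, apply \eqref{5.2} with $X=e_i$, and sum over $i$. Using
\begin{equation*}
\sum_{i=1}^{n}\bigl((2-n)\varphi_i^2-\Phi\bigr)=-2(n-1)\Phi
\end{equation*}
together with the convention $2\tau=\sum_i Ric(e_i)$, this should produce the scalar-curvature upper bound
\begin{equation*}
\tau\;\leq\;-(n-1)\Phi\Bigl(\frac{k}{f^2}+(\ln f)''\Bigr)+\frac{n(n-1)(k-f'{}^2)}{2f^2}.
\end{equation*}
Combining with the standing lower bound $\tau\geq n(n-1)(\tilde k-f'{}^2)/(2f^2)$ and cancelling the common $f'{}^2$ term should give the key inequality
\begin{equation*}
\frac{n(n-1)(\tilde k-k)}{2f^2}\;\leq\;-(n-1)\Phi\Bigl(\frac{k}{f^2}+(\ln f)''\Bigr).
\end{equation*}

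The contradiction then drops out of a sign check. The left-hand side is nonnegative because $\tilde k\geq k$ and $f>0$, while the right-hand side is nonpositive because $\Phi\geq 0$ and, crucially, $(\ln f)''+k/f^2>0$. Hence both sides must vanish identically, forcing $\Phi\equiv 0$ (and incidentally $\tilde k=k$) on $M$. Equality in the summed Ricci inequality then forces equality in \eqref{5.2} for each $e_i$, so the equality clause of Proposition~\ref{P:2.1}(b) is in force. Case (b.2) is ruled out because it requires $\Phi\equiv 1$, so case (b.1) must hold: $M$ lies in a slice $S(t_0)$ with $f'(t_0)=0$, contradicting the hypothesis $f'\ne 0$ on $I$.

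The hardest part here is really just the bookkeeping of signs in the summed inequality — the strict inequality $(\ln f)''>-k/f^2$ is exactly what is needed to make the right-hand side strictly nonpositive and thereby extract $\Phi\equiv 0$. Once that is in hand, the geometric obstruction is already packaged inside the sharp equality statement of Proposition~\ref{P:2.1}, which converts $\Phi\equiv 0$ into the unwanted identity $f'(t_0)=0$.
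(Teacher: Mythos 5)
Your proof is correct: summing the Ricci estimate \eqref{5.2} of Proposition~\ref{P:2.1}(b) over an orthonormal frame (with the convention $2\tau=\sum_i Ric(e_i)$, which is the one forced by the factor $n(n-1)/2$ in \eqref{5.4}), comparing signs against the hypothesis $\tilde k\ge k$ and $(\ln f)''+k/f^2>0$ to extract $\Phi\equiv 0$, and then using the equality clause to land in case (b.1) with $f'(t_0)=0$ is exactly the intended argument. Note that this paper states the proposition without proof, citing \cite{CW2}; the proof there is in substance the same (it works directly from the Gauss equation, of which the Ricci bound you invoke is the packaged form), so there is nothing to flag.
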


\begin{proposition}  {\rm \cite{CW2}} \label{P:5.3} Let $M$ be a
Riemannian $n$-manifold whose scalar curvature satisfies
\begin{equation}\begin{array}{rll}\label{5.6}\tau >  \frac{n(n-1)( {k}-f'{}^2)}{2f^2}\end{array}\end{equation} at one point, where $f$ is a positive function satisfying $(\ln f)''\ge - k/f^2$ on $ I$.
Then   $M$ cannot be isometrically minimally immersed in
$R^m(k,f)$.
\end{proposition}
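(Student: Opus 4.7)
The plan is to argue by contradiction, assuming $M$ is isometrically and minimally immersed in $R^m(k,f)$, and deriving an upper bound on $\tau$ by tracing the Ricci curvature estimate of Proposition \ref{P:2.1}. The key observation is that the hypothesis $(\ln f)''\geq -k/f^{2}$ makes the factor $\tfrac{k}{f^{2}}+(\ln f)''$ nonnegative, so summing the estimate over an orthonormal frame kills the sign-indefinite piece in the right direction.

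First I would split into the two cases of Proposition \ref{P:2.1}. If $(\ln f)''+k/f^{2}\equiv 0$ on $I$, case (a) gives at any point $p\in M$, for every unit $e\in T_pM$, $Ric(e)\leq (n-1)(k-f'^{2})/f^{2}$; summing over an orthonormal frame $\{e_1,\dots,e_n\}$ of $T_pM$ immediately yields $2\tau(p)\leq n(n-1)(k-f'^{2})/f^{2}$, contradicting \eqref{5.6} at the distinguished point.

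Otherwise, $(\ln f)''+k/f^{2}\not\equiv 0$, and case (b) applies. Choose an orthonormal frame $\{e_i\}$ at the point $p$ where \eqref{5.6} is strict, write $\varphi_i=\langle e_i,\partial_t\rangle$, and recall $\Phi=\sum_i\varphi_i^{2}$ by \eqref{3.11}. Summing \eqref{5.2} over $i$,
\begin{equation*}
2\tau(p)\leq \Bigl((2-n)\Phi - n\Phi\Bigr)\Bigl(\tfrac{k}{f^{2}}+(\ln f)''\Bigr) + \tfrac{n(n-1)(k-f'^{2})}{f^{2}} = -2(n-1)\Phi\Bigl(\tfrac{k}{f^{2}}+(\ln f)''\Bigr) + \tfrac{n(n-1)(k-f'^{2})}{f^{2}}.
\end{equation*}
Since $\Phi\geq 0$ and $k/f^{2}+(\ln f)''\geq 0$ by hypothesis, the first summand is $\leq 0$, so $\tau(p)\leq n(n-1)(k-f'^{2})/(2f^{2})$, again contradicting \eqref{5.6}.

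There is no real obstacle beyond bookkeeping: the only step that needs care is the trace computation, specifically verifying that $\sum_i\bigl((2-n)\varphi_i^{2}-\Phi\bigr)=-2(n-1)\Phi$, so that the sign of the coefficient multiplying the nonnegative quantity $\tfrac{k}{f^{2}}+(\ln f)''$ comes out right. Note also that, compared with Proposition \ref{P:5.2}, the hypotheses $f'\neq 0$ and strict inequality $(\ln f)''>-k/f^{2}$ are relaxed, at the cost of requiring only strict scalar-curvature inequality at one point rather than a nonstrict inequality everywhere; the trace argument handles both versions uniformly.
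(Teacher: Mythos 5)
The paper offers no proof of Proposition~\ref{P:5.3}; it is quoted from \cite{CW2}. Your trace argument is the natural (and surely the intended) derivation from Proposition~\ref{P:2.1}, and the computation is correct: $\sum_{i}\bigl((2-n)\varphi_i^{2}-\Phi\bigr)=(2-n)\Phi-n\Phi=-2(n-1)\Phi$, the hypothesis $(\ln f)''\ge -k/f^{2}$ makes the coefficient $\tfrac{k}{f^{2}}+(\ln f)''$ nonnegative, and $\Phi\ge 0$ kills the extra term, giving $\tau\le n(n-1)(k-f'^{2})/(2f^{2})$ pointwise, which contradicts \eqref{5.6} at the distinguished point.

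One small logical point worth tightening: the two cases of Proposition~\ref{P:2.1} are ``$(\ln f)''+k/f^{2}=0$ on $R$'' and ``$(\ln f)''+k/f^{2}\ne 0$ on $R$,'' i.e.\ identically zero versus nowhere zero. Your dichotomy ``$\equiv 0$'' versus ``$\not\equiv 0$'' does not match the second alternative, so if $(\ln f)''+k/f^{2}$ vanishes at some points of $I$ but not at others, neither case of the quoted proposition literally applies. This is harmless in substance --- the bound \eqref{5.1} is exactly \eqref{5.2} with $\tfrac{k}{f^{2}}+(\ln f)''=0$, and the underlying Gauss-equation estimate is pointwise, so \eqref{5.2} holds at every point under the standing hypothesis $(\ln f)''\ge -k/f^{2}$ --- but you should either say this explicitly or phrase the argument as a single pointwise application of \eqref{5.2} at the point where \eqref{5.6} holds, rather than as a global case split.
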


\subsection{Classification of parallel submanifolds in $R^m(k,f)$}

\begin{theorem}   {\rm \cite{CW2}}  \label{T:6.1} If $R^m(k,f)$ contains no open subsets of constant curvature, then  a submanifold $M$ of $R^m(k,f)$ is a parallel submanifold if and only if one of the following statements holds:

\noindent  $(1)$  $M$  is a transverse submanifold which lies in a
slice $S(t_0)$ of $R^m(k,f)$ as a parallel submanifold.

\noindent  $(2)$  $M$ is an $\mathcal H$-submanifold which is
locally the warped product $ I\times_{ f} N^{n-1}$, where
$N^{n-1}$  is a submanifold of  $S$. Furthermore, we have

$(2.1)$ if $f'\ne 0$ on $I$, then $M$ is totally geodesic in
$R^m(k,f)$;

$(2.2)$ if $f'= 0$ on $I$, then $N^{n-1}$ is a parallel
submanifold of  $S$.

\end{theorem}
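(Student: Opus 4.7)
The forward implication can be checked by direct calculation. In case (1), each slice $S(t_0)$ is a totally umbilical hypersurface of $R^m(k,f)$ whose shape operator is a scalar multiple of the identity (with factor controlled by $f'(t_0)/f(t_0)$); combining this with parallelism of $M$ inside $S(t_0)$ yields parallelism in the ambient via the standard splitting of the second fundamental form for a totally umbilical intermediate submanifold. In case (2), the structure $M = I\times_f N^{n-1}$ makes $h$ decompose into a piece tangent to $I$ (controlled by $f'/f$ and vanishing exactly when $M$ is totally geodesic) and a piece coming from the second fundamental form of $N^{n-1}$ in $S$; in the two sub-cases one checks that $\bar\nabla h=0$.

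For the converse the key tool is the Codazzi equation: since $\bar\nabla h=0$, for all $X,Y,Z\in\Gamma(TM)$
\begin{equation*}
\bigl(\tilde R(X,Y)Z\bigr)^{\perp}=(\bar\nabla_X h)(Y,Z)-(\bar\nabla_Y h)(X,Z)=0 .
\end{equation*}
The plan is to substitute Lemma \ref{L:2.2} into this identity. Writing $\partial_t=T+N$ with $T=\partial_t^{T}$, $N=\partial_t^{\perp}$, and decomposing each $X\in TM$ as $X=\varphi_X\partial_t+\hat X$ with $\hat X\in\mathcal L(S)$, the left-hand side splits into a term proportional to $f''/f$ and one proportional to $(k-f'^{\,2})/f^{2}$. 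The hypothesis that $R^m(k,f)$ has no open subset of constant curvature means precisely that $f''/f\neq (k-f'^{\,2})/f^{2}$ on an open dense set, so the two families of coefficients must vanish separately there, and then everywhere by continuity.

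The main obstacle is to use these two independent families of identities to show that $T$ is either identically zero or identically equal to $\partial_t$, ruling out the intermediate case where $0\neq|T|<1$. The plan is to exploit the identity obtained from $\tilde\nabla_X\partial_t=(f'/f)(X-\varphi_X\partial_t)$, which by Gauss and Weingarten gives the tangential and normal relations
\begin{equation*}
\nabla_X T-A_N X=\tfrac{f'}{f}(X-\varphi_X T),\qquad h(X,T)+\nabla_X^{\perp}N=-\tfrac{f'}{f}\varphi_X N .
\end{equation*}
Feeding these into the parallel-$h$ Codazzi identities (testing, in turn, with triples $(X,Y,Z)$ in which one or two entries are $T$ and the others are orthogonal to $T$) produces algebraic relations forcing $A_N T$ to be collinear with $T$ and $|T|^{2}$ to be locally constant. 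Under the non-constant-curvature hypothesis the only admissible values are $|T|^{2}=0$ or $|T|^{2}=1$.

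Once this dichotomy is established, the rest is bookkeeping. If $T\equiv 0$, the composition $t\circ\iota$ is constant, so $M$ lies in a single slice $S(t_0)$, and, because the slice is totally umbilical with parallel mean curvature, the ambient $\bar\nabla h=0$ descends to parallelism of the induced second fundamental form of $M$ inside $S(t_0)$; this yields (1). If $N\equiv 0$, then $\partial_t\in\Gamma(TM)$ everywhere, so $M$ is foliated by integral curves of $\partial_t$; the orthogonal distribution is integrable and the leaves project to a single submanifold $N^{n-1}\subset S$, giving $M=I\times_f N^{n-1}$. In the resulting warped product, the tangent component of the second fundamental form is driven by $f'$: when $f'\neq 0$ on $I$, the parallelism relation together with the normal identity above force $h\equiv 0$, yielding (2.1); when $f'\equiv 0$ the ambient reduces to a Riemannian product and the parallelism of $h$ in $R^m(k,f)$ is equivalent to the parallelism of the second fundamental form of $N^{n-1}$ in $S$, yielding (2.2).
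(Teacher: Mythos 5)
This paper does not actually prove Theorem \ref{T:6.1}; it is quoted from the earlier paper \cite{CW2}, so there is no in-text argument to compare yours against. Judged on its own terms, your strategy for the converse --- the Codazzi equation with $\bar\nabla h=0$, so $(\tilde R(X,Y)Z)^{\perp}=0$ for tangent $X,Y,Z$, fed with the curvature tensor of Lemma \ref{L:2.2} --- is the right one and does lead to the theorem. But two points need repair. First, you misdescribe how the curvature coefficients enter. Writing $a=f''/f$ and $b=(k-f'^{2})/f^{2}$, the normal part of $\tilde R(X,Y)Z$ does not split into two independently vanishing families; a direct computation from \eqref{2.10} gives
\begin{equation*}
(\tilde R(X,Y)Z)^{\perp}=(a+b)\,\bigl(\varphi_Y\langle X,Z\rangle-\varphi_X\langle Y,Z\rangle\bigr)\,\partial_t^{\perp},
\end{equation*}
a single coefficient $a+b=(\ln f)''+k/f^{2}$ times a common geometric factor. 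Accordingly, the hypothesis that $R^m(k,f)$ contains no open subset of constant curvature is equivalent to $a+b\ne 0$ on a dense subset of $I$ (since $b'=-\tfrac{2f'}{f}(a+b)$, the identity $a+b\equiv0$ on an interval forces $a=-b=\mathrm{const}$ there), not to $a\ne b$ as you state.

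Second, and more seriously, the heart of the converse --- the dichotomy $\partial_t^{T}=0$ or $\partial_t^{\perp}=0$ --- is only announced as a plan (``produces algebraic relations forcing\dots''), never derived; as submitted this is a genuine gap. The display above in fact closes it and renders your auxiliary machinery ($A_NT$ collinear with $T$, local constancy of $|T|^{2}$) unnecessary: wherever $a+b\ne0$, either $\partial_t^{\perp}=0$, or $\varphi_Y\langle X,Z\rangle=\varphi_X\langle Y,Z\rangle$ for all tangent $X,Y,Z$, and the latter forces $\varphi\equiv0$ on $T_xM$ for $n\ge2$ (take $Z=X$ a unit vector orthogonal to $Y$). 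Continuity of $\Phi$, density of $\{a+b\ne0\}$, and the observation that $t$ is locally constant (hence $\partial_t^{T}=0$) on any open piece of $M$ mapping into the zero set of $a+b$, then give $\Phi\equiv0$ or $\Phi\equiv1$ globally. Your bookkeeping for the end of the proof is essentially sound --- e.g.\ when $\partial_t^{\perp}=0$, the identity $\tilde\nabla_X\partial_t=\frac{f'}{f}(X-\varphi_X\partial_t)$ gives $h(X,\partial_t)=0$, and then $0=(\bar\nabla_Xh)(Y,\partial_t)=-\frac{f'}{f}h(X,Y)$ yields $(2.1)$ --- but you must actually carry out the pivotal Codazzi computation rather than gesture at it.
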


\section{$p$-harmonic maps and $p$-harmonic morphisms into submanifolds in $R^m(k,f)$}

A smooth map $u: M \to N$ is said to be \emph{$p$-harmonic}, $p >
1\, ,$ if it is a critical point of the $p$-energy functional:
$${E}_p(u) = \frac 1p \int _M |du|^p dv$$ with respect to
any compactly supported smooth variation, where $|du|$ is the
Hilbert-Schmidt norm of the differential $du$ of $u$. It follows
from the first variational formula for the $p$-energy functional,
$u$ is $p$-harmonic if and only if $u$ is a weak solution of the
Euler-Lagrange equation $div (|du|^{p-2}du) =0\, .$ Examples of
$p$-harmonic maps include geodesics, minimal submanifolds,
conformal maps between manifolds of the same dimensions, and
harmonic maps (when $p=2\, $) (cf. \cite{NS,Th,W,W3}). A $C^2$ map
$u: N_1 \to M$ is called a \emph{$p$-harmonic morphism} if for any
$p$-harmonic function $f$ defined on an open set $V$ of $M$, the
composition $f \circ u$ is $p$-harmonic on $u^{-1}(V)$. Examples
of $p$-harmonic morphisms include the Hopf fibrations.

\subsection{Existence of $p$-Harmonic Maps}

\begin{theorem} \label{T:9.0}
Let $M$ be a complete Riemannian manifold and $N$ be a compact
Riemannian manifold with the universal cover $\widetilde N = R
\times_{ f} S^{m-1}(k)\, ,$ where $f$ is a positive convex
function on $R\, ,$ and $k \le 0\, .$ Then any continuous map from
$M$ into $N$ of finite $p$-energy can be deformed to a
$C^{1,\alpha}$ $p$-harmonic map minimizing $p$-energy in its
homotopy class, where $1<p<\infty$.
\end{theorem}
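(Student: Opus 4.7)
The plan is to reduce the theorem to a direct-method existence result for $p$-energy minimizers into Cartan--Hadamard targets, after first verifying that the curvature hypotheses put $\widetilde N$ in that setting. By Lemma \ref{L:2.2}, for a plane spanned by $\partial_t$ and a unit $X \in \mathcal{L}(S^{m-1}(k))$, the sectional curvature of $\widetilde N = R \times_f S^{m-1}(k)$ equals $-f''/f \le 0$, since $f$ is convex and positive, while for a plane spanned by orthonormal $X,Y \in \mathcal{L}(S^{m-1}(k))$, it equals $(k - f'^2)/f^2 \le 0$, since $k \le 0$. Hence every sectional curvature of $\widetilde N$ is non-positive, and because $\widetilde N$ is complete and simply-connected it is a Cartan--Hadamard manifold.

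I would then pass to the equivariant setting: let $\widetilde M \to M$ be the universal cover and lift $u$ to an equivariant map $\tilde u: \widetilde M \to \widetilde N$ with respect to the homomorphism $u_*: \pi_1(M) \to \pi_1(N)$, where $\pi_1(N)$ acts isometrically on $\widetilde N$ by deck transformations. Finiteness of the $p$-energy of $u$ on $M$ is equivalent to finiteness of the $p$-energy of $\tilde u$ per fundamental domain. The direct method is then applied to the class of equivariant $W^{1,p}_{\text{loc}}$-maps $\widetilde M \to \widetilde N$ that are equivariantly homotopic to $\tilde u$ with bounded $p$-energy per fundamental domain. The decisive structural input is that on a Cartan--Hadamard manifold the squared-distance function is strictly convex, so the $p$-energy is convex along geodesic homotopies and pointwise barycenter (center of mass) replacements can be performed; combined with the weak lower semicontinuity of $E_p$ on $W^{1,p}$ and local Sobolev compactness, this extracts a minimizer $v$ that descends, by equivariance, to a $p$-harmonic map $M \to N$ homotopic to $u$.

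The $C^{1,\alpha}$-regularity of $v$ then follows from the interior regularity theory for minimizers of the $p$-energy functional with values in a smooth manifold: on a non-positively curved target this theory produces everywhere H\"older-continuous gradient, rather than the partial regularity one must accept in general. The main obstacle I foresee is compactness of a minimizing sequence on the non-compact domain $\widetilde M$; this is precisely where non-positive curvature of $\widetilde N$ is decisive, since the convexity of $d^2$ together with $\pi_1(M)$-equivariance prevents the sequence from drifting to infinity on a fundamental domain and lets the barycenter construction manufacture locally uniformly bounded competitors without increasing $E_p$.
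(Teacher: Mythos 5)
Your proposal is correct and takes essentially the same route as the paper: use Lemma \ref{L:2.2} to check that convexity of $f$ and $k\le 0$ force all sectional curvatures of $\widetilde N$ to be nonpositive, so $\widetilde N$ is a Cartan--Hadamard manifold, and then apply the existence and $C^{1,\alpha}$-regularity theory for $p$-energy minimizers into compact manifolds with nonpositively curved universal cover. The only difference is that the paper treats that second step as a black box by citing \cite{W4}, whereas you sketch its equivariant direct-method proof.
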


\begin{proof} In view of Lemma \ref{L:2.2}, $\widetilde N$ is a
complete simply-connected Riemannian manifold of nonpositive
curvature. Then the assertion follows from \cite [Theorem 2.1 or
Corollary 2.4]{W4}.
\end{proof}

\subsection{ Maps of compact manifolds}

\begin{theorem} \label{T:9.1}
Every $p$-harmonic map from a compact manifold into $M$ or
$R^m(k,f)$ is constant, provided  $k \le 0\, ,$ $f$ is a positive
convex function on an open interval $I\, ,$ and $M$ is a totally
geodesic submanifold of $\, R^m(k,f)\, .$  In particular, if $M$
is a non-transversal parallel submanifold of $R^m(k,f)$ where $k
\le 0\, ,$ and $f$ is a positive convex function satisfying $f'\ne
0$ and $(\ln f)''\ne {-k}/{f^{2}}$ on an open interval $I\, ,$
then every $p$-harmonic map from a compact manifold into $M$ is
constant. \end{theorem}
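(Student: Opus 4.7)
The plan is to reduce both claims to a single Hadamard-type statement: every $p$-harmonic map from a compact boundaryless manifold into a simply connected Riemannian manifold of nonpositive sectional curvature is constant. I first check that $R^m(k,f)$ satisfies these structural hypotheses. Since $I$ is an interval and, for $k\le 0$, $S^{m-1}(k)$ is Euclidean or hyperbolic $(m-1)$-space, the warped product $R^m(k,f)$ is simply connected. Lemma~\ref{L:2.2} yields the two basic sectional curvatures: a plane containing $\partial_t$ has curvature $-f''/f\le 0$ (using $f>0$ and $f''\ge 0$), and a plane tangent to a slice has curvature $(k-f'{}^2)/f^2\le 0$ (using $k\le 0$); decomposing an arbitrary $2$-plane into its horizontal and vertical parts and expanding via Lemma~\ref{L:2.2} then shows that every sectional curvature of $R^m(k,f)$ is nonpositive.

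Next I reduce the totally-geodesic case to the ambient case: if $M$ is totally geodesic in $R^m(k,f)$, then by the Gauss formula \eqref{2.1} the second fundamental form of $M$ vanishes, so the $p$-tension field of $u:K\to M$ agrees with that of the composition $K\to M\hookrightarrow R^m(k,f)$; hence $u$ is $p$-harmonic into $M$ iff it is $p$-harmonic into $R^m(k,f)$, and it suffices to treat the ambient target.

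For the ambient target I would apply the standard convex-composition argument. Fix $y_0\in R^m(k,f)$ and set $\phi(y)=\tfrac12 d(y,y_0)^2$; on a simply connected manifold of nonpositive sectional curvature the Hessian comparison theorem gives $\mathrm{Hess}\,\phi\ge g$ on a convex neighbourhood containing the compact image $u(K)$. A direct computation from the $p$-harmonic equation $\mathrm{div}(|du|^{p-2}du)=0$ together with the definition of $\mathrm{Hess}\,\phi$ produces the identity
\begin{equation*}
\mathrm{div}\bigl(|du|^{p-2}\nabla(\phi\circ u)\bigr)=|du|^{p-2}\,\mathrm{tr}_g\bigl(u^{*}\mathrm{Hess}\,\phi\bigr)\ge |du|^{p},
\end{equation*}
and integration over the compact boundaryless $K$ forces $|du|\equiv 0$, i.e., $u$ is constant. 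For the ``in particular'' assertion: a short calculation from Lemma~\ref{L:2.2} shows that $R^m(k,f)$ has constant sectional curvature on an open set iff $(\ln f)''=-k/f^2$ there, so the hypothesis $(\ln f)''\ne -k/f^2$ makes Theorem~\ref{T:6.1} applicable; a non-transversal parallel submanifold falls into case~$(2)$, and $f'\ne 0$ places it in subcase~$(2.1)$, so $M$ is totally geodesic, and the first part applies.

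The principal obstacle I anticipate is the Hadamard step when $I\ne\mathbb{R}$: then $R^m(k,f)$ is incomplete and $d(\cdot,y_0)^2$ need not be globally smooth on all of $R^m(k,f)$. Since $u(K)$ is compact, this can be handled by localising to a precompact totally convex neighbourhood of $u(K)$ on which $\phi$ is smooth with $\mathrm{Hess}\,\phi\ge g$, or by replacing $\phi$ with an explicit warped-product convex function (for example $f(t)$, whose Hessian in the warped metric is $f''\,dt^2+(f'{}^2/f)g_{\mathrm{fibre}}\ge 0$) combined with the slice-wise squared distance. A secondary issue is that $p$-harmonic maps are only $C^{1,\alpha}$, so the divergence identity above must be interpreted weakly via standard Caccioppoli-type test functions in the degenerate elliptic theory for the $p$-Laplacian.
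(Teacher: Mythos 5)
Your proposal is correct and follows essentially the same route as the paper: nonpositive curvature of $R^m(k,f)$ from Lemma \ref{L:2.2} (using $f''\ge 0$ and $k\le 0$), reduction to the totally geodesic case, the strictly convex function (squared distance) argument to force a $p$-harmonic map of a compact manifold to be constant, and Theorem \ref{T:6.1} to see that a non-transversal parallel submanifold with $f'\ne 0$ and $(\ln f)''\ne -k/f^2$ is totally geodesic. The only difference is that you write out the divergence identity and the integration over the compact domain explicitly, and flag the incompleteness of $R^m(k,f)$ when $I\ne\mathbf{R}$, whereas the paper delegates this step to \cite{BO}, \cite{WY} and \cite[Theorem 8.1]{W}.
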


\begin{proof}
By Lemma  \ref{L:2.2}, $ I\times_{ f} S$ is nonpositively curved.
Thus, the Gauss curvature equation implies that $Sec^M =
Sec^{R^m(k,f)}\le 0\, .$  It follows that the image under any
$p$-harmonic map of a compact manifold lies in a domain of a
strictly convex function, e.g. the squared distance function (cf
\cite{BO}). But this is impossible unless it is constant by \cite
{WY} or \cite [Theorem 8.1]{W}. This proves the first assertion.
Now the second assertion follows from the Classification Theorem
\ref{T:6.1} of parallel submanifolds, that $M$ has to be totally
geodesic in $R^m(k,f)\, .$
\end{proof}

\begin{corollary} \label{C:9.1} Let $M$ be a submanifold of $R^m(k,f)\, $ as in Theorem \ref{T:9.1}.
  Then there are no compact geodesics (without boundary), and no compact minimal submanifolds in $M$ or in $R^m(k,f)$. \end{corollary}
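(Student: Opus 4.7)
The plan is to apply Theorem~\ref{T:9.1} twice, by exhibiting each of the forbidden objects as a non-constant $p$-harmonic map from a compact manifold into $M$ or $R^{m}(k,f)$. Both reductions were already flagged in Section~3, where geodesics and minimal (isometric immersions of) submanifolds are listed among the standard examples of $p$-harmonic maps, so the content of the corollary is simply to put those examples together with the vanishing theorem.

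First, suppose $\gamma$ is a compact geodesic without boundary. Parametrizing by arc-length and rescaling the period, we get a smooth non-constant map $\gamma : S^{1}\to M$ (or $\gamma : S^{1}\to R^{m}(k,f)$). Because $|d\gamma|\equiv 1$, the $p$-harmonic equation $\mathrm{div}(|d\gamma|^{p-2}\,d\gamma)=0$ reduces to the geodesic equation $\nabla_{\gamma'}\gamma'=0$, so $\gamma$ is $p$-harmonic for every $p>1$. Since $S^{1}$ is compact, Theorem~\ref{T:9.1} forces $\gamma$ to be constant, contradicting non-constancy.

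Second, suppose $N$ is a compact minimal submanifold of $M$ (or of $R^{m}(k,f)$). Then the inclusion $\iota : N\hookrightarrow M$ is an isometric immersion whose tension field coincides with its mean curvature vector, which vanishes by minimality. Thus $\iota$ is a harmonic (i.e.\ $2$-harmonic) map from the compact manifold $N$ into $M$. Applying Theorem~\ref{T:9.1} with $p=2$ shows $\iota$ must be constant, contradicting that $\iota$ is an immersion of a positive-dimensional manifold.

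The hypotheses needed to invoke Theorem~\ref{T:9.1} --- namely $k\le 0$, convexity of $f$, and total geodesy of $M$ in $R^{m}(k,f)$ (or the fall-back hypothesis that $M$ be a non-transversal parallel submanifold with $f'\neq 0$ and $(\ln f)''\neq -k/f^{2}$, which forces total geodesy via the classification Theorem~\ref{T:6.1}) --- are assumed by the phrase \emph{``as in Theorem~\ref{T:9.1}''}. I therefore anticipate no real obstacle; the entire argument is just the translation of the two geometric notions into the $p$-harmonic framework to which Theorem~\ref{T:9.1} applies.
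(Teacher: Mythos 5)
Your proof is correct and follows the same route as the paper: both arguments reduce the corollary to Theorem~\ref{T:9.1} by observing that an arc-length-parametrized closed geodesic and a minimal isometric immersion of a compact manifold are non-constant $p$-harmonic maps from compact manifolds, which that theorem forbids. The only cosmetic difference is that the paper cites these two characterizations from \cite{W} (Theorems 1.10(i) and 1.14(i)) while you verify them directly.
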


\begin{proof}  This follows from the previous Theorem \ref{T:9.1},
and \cite[Theorems 1.10 (i) and 1.14 (i) p.635,637]{W} which state
that a curve parametrized proportionally to the arc length is
$p$-harmonic for any $p \ge 1$ if and only if it is a geodesic,
and an isometric immersion of $M$ is minimal if and only if it is
$p$-harmonic for every $1 < p <\infty$.
\end{proof}

\subsection{ Maps of complete noncompact manifolds}

In the following, we assume that $N_1$ is a complete noncompact
Riemannian manifold  and  $B(x_0;r)$ is  the geodesic ball of
radius $r$ centered at $x_0 \in N_1$.
We recall some notions from \cite{WLW}:

\begin{defn}\label{D:3.1}
A  function $f$  on $N_1$ is said to have \emph{$p$-{finite
growth}} $($or, simply, \emph{is $p$-{finite}}$)$ if there exists
$x_0 \in N_1$ such that
\begin{equation} \lim_{r \to \infty} \inf\frac{1}{r^p}\int_{B(x_0;r)} |f|^{q}dv < \infty ;   \label{9.1} \end{equation}
it has \emph{$p$-{infinite growth}} $($or, simply, \emph{is
$p$-infinite}$)$ otherwise. \label{def4.1}\end{defn}

\begin{defn}\label{D:3.2}
A function $f$ has \emph{$p$-mild growth} $($or, simply, \emph{is
$p$-mild}$)$ if there exist  $ x_0 \in N_1\, ,$ and a strictly
increasing sequence of $\{r_j\}^\infty_0$ going to infinity, such
that for every $l_0>0$, we have
\begin{equation}\sum\limits_{j=\ell_0}^{\infty}\text{\small $\bigg(\frac{(r_{j+1}-r_j)^p}{\int_{B(x_0;r_{j+1})\backslash B(x_0;r_{j})}|f|^qdv}\bigg)$}^{\frac1{p-1}}=\infty \,;   \label{9.2}\end{equation}
and has \emph{$p$-severe growth} $($or, simply, \emph{is
$p$-severe}$)$ otherwise. \label{def4.2}\end{defn}

\begin{defn}\label{D:3.3}
A function $f$ has \emph{$p$-obtuse growth} $($or, simply,
\emph{is $p$-obtuse}$)$ if there exists $x_0 \in N_1$ such that
for every $a>0$, we have
\begin{equation}\int^\infty_a \text{\small $\bigg( \frac{1}{\int_{\partial B(x_0;r)}|f|^qdv}\bigg)$}^\frac{1}{p-1}dr = \infty \, ;   \label{9.3}\end{equation}
and has \emph{$p$-acute growth} $($or, simply, \emph{is
$p$-acute}$)$ otherwise . \label{def4.3}
\end{defn}

\begin{defn}\label{D:3.4}
A function $f$ has \emph{$p$-moderate growth} $($or, simply,
\emph{is $p$-moderate}$)$ if there exist  $ x_0 \in N_1$, and
$F(r)\in {\mathcal F}$, such that \begin{equation} \lim \sup _{r
\to \infty}\frac {1}{r^p F^{p-1} (r)}\int_{B(x_0;r)} |f|^{q}dv <
\infty . \label{9.4}
\end{equation}
And it has \emph{$p$-immoderate growth} $($or, simply, \emph{is
$p$-immoderate}$)$ otherwise,  where
\begin{equation} {\mathcal F} = \{F:[a,\infty)\longrightarrow
(0,\infty) |\int^{\infty}_{a}\text{\small$
\frac{dr}{rF(r)}$}=+\infty \ \ for \ \ some \ \ a \ge 0 \}\, .
\label{9.5}\end{equation} {\rm (Notice that the functions in
{$\mathcal F$} are not necessarily monotone.)} \label{def4.4}
\end{defn}

\begin{defn}\label{D:3.5}
A function $f$ has \emph{$p$-small growth} $($or, simply, \emph{is
$p$-small}$)$ if there exists $ x_0 \in N_1\, ,$ such that for
every $a>0\, ,$ we have \begin{equation}\int^\infty_a\text{\small
$\bigg( \frac{r}{\int_{ B(x_0;r)}|f|^qdv}\bigg)$}^\frac{1}{p-1}dr
= \infty ; \label{9.6}
\end{equation}
and  has \emph{$p$-large growth} $($or, simply, \emph{is
$p$-large}$)$ otherwise.\label{def4.5}
\end{defn}

We introduce the following notion in \cite {W3}:
\begin{defn}\label{D:3.6}A function $f$ has \emph{$p$-balanced growth} $(or, simply,
\emph{is $p$-balanced})$ if $f$ has one of the following:
\emph{$p$-finite}, \emph{$p$-mild}, \emph{$p$-obtuse},
\emph{$p$-moderate}, or \emph{$p$-small} growth, and has
\emph{$p$-imbalanced growth} $(or, simply, \emph{is
$p$-imbalanced})$ otherwise.
\end{defn}

\noindent The above definitions \ref{D:3.1}-\ref{D:3.6} depend on
$q$, and $q$ will be specified in the context in which the
definition is used.
\smallskip

\begin{theorem} \label{T:9.3}
Let $M$ and $R^m(k,f)$ be as in  Theorem \ref{T:9.1}, and $u$ from
$N_1$ into $M$ or $R^m(k,f)$ be a smooth (a) harmonic map where
$p=2$ or (b) $p$-harmonic morphism with $p
>2$. Then $u$ is either a constant or the function $dist^{2}(u(x),y_0)$
on $N_1$ has \emph{$p$-imbalanced} growth for all $q > p-1$. Here
$dist^{2}(u(x),y_0)$ is the square of the distance between $u(x)$
and a fixed point $y_0$ in $M$ or $R^m(k,f)\, .$
\end{theorem}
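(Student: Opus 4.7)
The plan is to reduce the assertion to a Liouville-type theorem for $p$-subharmonic functions by exploiting the nonpositive curvature of the target, which in turn forces strict convexity of the squared distance function.

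\emph{Step 1 (Curvature of the target).} By Lemma \ref{L:2.2}, since $f$ is positive and convex ($f''\ge 0$) and $k\le 0$, the sectional curvatures of $R^m(k,f)$ -- namely $-f''/f$ on planes containing $\partial_t$ and $(k-f'{}^2)/f^2$ on planes tangent to $S$ -- are all nonpositive. As $M$ is a totally geodesic submanifold of $R^m(k,f)$, the Gauss equation gives $\text{Sec}^M\le 0$. Passing to the universal cover if necessary (this does not alter the growth hypotheses for $\operatorname{dist}^2(u(x),y_0)$, which are local in the base $N_1$), we may treat the target as a Cartan--Hadamard manifold. Hence $F(y):=\tfrac12 \operatorname{dist}^2(y,y_0)$ is smooth on the whole target and satisfies $\operatorname{Hess} F\ge g$ as quadratic forms.

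\emph{Step 2 ($p$-subharmonicity of $F\circ u$).} Set $\Psi:=F\circ u$. In case (a), with $u$ harmonic and $p=2$, the composition formula
\begin{equation*}
\Delta\Psi = \langle \operatorname{Hess} F(du),du\rangle + \langle \nabla F,\tau(u)\rangle,
\end{equation*}
together with $\tau(u)=0$ and the Hessian bound from Step 1, yields $\Delta\Psi\ge |du|^2\ge 0$. In case (b), with $u$ a $p$-harmonic morphism and $p>2$, one uses the characterisation of $p$-harmonic morphisms as horizontally weakly conformal $p$-harmonic maps: the corresponding chain rule for $\Delta_p$, applied to the convex function $F$, gives a weak inequality $\Delta_p\Psi\ge c_p|du|^p$ for some positive constant $c_p$.

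\emph{Step 3 (Liouville argument).} The hypothesis that $\operatorname{dist}^2(u(\cdot),y_0)$ is $p$-balanced for some $q>p-1$ -- i.e.\ the negation of the alternative conclusion -- is precisely the growth input required by the Liouville-type theorems of the authors and their collaborators (cf. \cite{WLW,W3}): any nonnegative, $p$-subharmonic function of $p$-balanced growth on a complete noncompact manifold is constant. Applying this to $\Psi$ forces $\Psi\equiv$ const, and feeding back into the subharmonicity estimate of Step 2 gives $|du|\equiv 0$, so $u$ is constant.

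The main obstacle will be securing the quantitative weak inequality $\Delta_p\Psi\ge c_p|du|^p$ in case (b); this is exactly where the $p$-harmonic \emph{morphism} hypothesis is used (rather than mere $p$-harmonicity), via horizontal conformality and a careful manipulation of the factor $|\nabla\Psi|^{p-2}$ in the chain rule. Once this ingredient is in hand, Steps 1 and 3 are essentially bookkeeping: Lemma \ref{L:2.2} supplies the curvature condition, and the $p$-balanced growth machinery of \cite{WLW,W3} delivers the conclusion.
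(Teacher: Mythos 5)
Your outline follows essentially the same route as the paper: the authors dispose of this theorem by citing \cite[Theorem 5.4(i)]{WLW}, which rests on exactly the ingredients you assemble --- the composition formula making $\operatorname{dist}^2(u(\cdot),y_0)$ $p$-subharmonic (nonpositive curvature of the target via Lemma \ref{L:2.2} plus convexity of the squared distance, with the $p$-harmonic morphism hypothesis handling the case $p>2$), followed by the Liouville-type growth estimates for $p$-subharmonic functions of $p$-balanced growth. One small correction: the reduction you phrase as ``passing to the universal cover'' is really a completeness issue, since $I\times_f S^{m-1}(k)$ with $k\le 0$ is already simply connected but need not be complete when $I$ is a proper interval --- a point the paper itself acknowledges only in the proof of Theorem \ref{T:10.1} and otherwise elides.
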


\begin{proof}
This follows immediately from \cite [Theorem 5.4.(i).]{WLW}, based
on a composition formula and estimates on $p$-subharmonic
functions\cite [Theorems 2.1-2.5]{WLW}. For $p > 2\, ,$ the
composition $dist^{2}(u(x),y_0)$(of a $p$-harmonic morphism $x
\longmapsto u(x)$ and a convex function $y \longmapsto
dist^{2}(y,y_0)$) is $p$-subharmonic on $N_1$(cf.\cite [Theorem
5.2.]{WLW}). For $p=2$, see also \cite {CW}.
\end{proof}

\vskip.0in
\section{Biharmonic maps into $R^m(k,f)$ or submanifolds of $R^m(k,f)$}

We apply our  results from previous sections to study biharmonic,
conformal, $p$-harmonic maps into $R^m(k,f)$ or into submanifolds
of $R^m(k,f)\, .$ We also study isometric minimal immersions in
$R^m(k,f)$ and its submanifolds.
\smallskip

 A smooth map $u : M \to N$ between
two Riemannian manifolds is said to be \emph{ biharmonic} if $u$
is a critical point of bi-energy: $$E_{(2)}(u) = \int _M |(d +
d^\ast)^2 u|^2 dx = \int _M |\tau (u) |^2 dx $$ with respect to
any compactly supported variation, and \emph{ polyharmonic of
order $k$} if $u$ is a critical point of $$E_{(k)}(u) = \int _M
|(d + d^\ast )^k u|^2 dx $$ with respect to any compactly
supported variation, where $\tau (u)$ is the tension field of $u$,
and $d^\ast$ is the adjoint of the exterior differential operator
$d$.

A $C^{\infty}$ section of a bundle over a Riemannian manifold
 has \emph{$p$-imbalanced} growth if its norm is so, and \emph{$p$-balanced} growth otherwise.

\begin{theorem}\label{T:10.1} Let $M$ be a submanifold of $R^m(k,f)\, $ as in  Theorem \ref{T:9.1}.
Let  $u$ be a smooth biharmonic isometric immersion of a complete
manifold $N_1$ into $M$ or $R^m(k,f)$.  If for some $q>2$, $\tau
(u)$ has \emph{$2$-balanced} growth, then we have:
\begin{itemize}

 \item [(1)]   $\tau (u)$ is parallel. Further,  if $u$ is not harmonic, then $N_1$ is parabolic.

\item [(2)]   $u$ is either the unique harmonic map unless it is a
constant or maps $N_1$ onto a closed geodesic $\gamma$ in $M$ or
$R^m(k,f)$ $($in the latter case, we have uniqueness up to
rotations of $\gamma\,)$; or $u$ is of rank one in which case
$\tau (u)$ at each point is tangent to the image curve of $u$.

\item [(3)]  If, for some $x_0 \in M$ and $q=2$, $du$ has
\emph{$2$-balanced} growth, then $u$ is a harmonic map minimizing
energy in its homotopy class.

\item [(4)]  Under the assumption of $(3)$, if for some $q>2$, and
$y_0 \in u(N_1)$, its \emph{distance function}, defined by $
dist(u(x),y_0)$ for $x \in N_1\, ,$ has \emph{$2$-balanced}
growth, then $u$ is constant.
 \end{itemize}
\end{theorem}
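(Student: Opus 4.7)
The plan is to combine the nonpositive curvature of the target with a Bochner identity for biharmonic maps and the Liouville-type vanishing theorems for $p$-subharmonic functions from \cite{WLW}. Whether the target is $M$ or all of $R^m(k,f)$, the hypotheses inherited from Theorem~\ref{T:9.1} together with Lemma~\ref{L:2.2} give $Sec^{R^m(k,f)}\le 0$, and since $M$ is totally geodesic Gauss' equation yields $Sec^M = Sec^{R^m(k,f)}\le 0$, exactly as in the proof of Theorem~\ref{T:9.1}.

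For $(1)$, I would start from Jiang's biharmonic equation,
\begin{equation*}
-\Delta\tau(u)-{\rm trace}\,\tilde R(du,\tau(u))du=0,
\end{equation*}
and take the target inner product with $\tau(u)$. Nonpositive ambient sectional curvature makes the curvature term nonpositive, and the standard Bochner identity then yields
\begin{equation*}
\tfrac12\Delta|\tau(u)|^2 \ge |\nabla\tau(u)|^2 \ge 0,
\end{equation*}
so $|\tau(u)|^2$ is subharmonic on $N_1$. The $2$-balanced growth hypothesis on $\tau(u)$ (for some $q>2$), combined with \cite[Theorems 2.1--2.5]{WLW}, forces $\nabla\tau(u)\equiv 0$, i.e., $\tau(u)$ is parallel. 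If $u$ is not harmonic, $|\tau(u)|$ is then a positive constant, and the $2$-balanced growth condition applied to this constant function reduces exactly to a parabolic capacity condition on $N_1$, giving parabolicity.

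Items $(2)$--$(4)$ are then structural consequences of the parallel tension field. For $(2)$, if $\tau(u)\equiv 0$ then $u$ is harmonic and Theorem~\ref{T:9.0}, together with Hartman-type uniqueness on the nonpositively curved universal cover $\widetilde N$, yields $u$ as the unique harmonic map in its homotopy class unless it is constant or maps onto a closed geodesic, where rotations along the geodesic provide the one-parameter ambiguity. If $\tau(u)$ is nonzero and parallel, a direct analysis of $\nabla du$ through the Codazzi equation for the isometric immersion, combined with the curvature sign, forces $du$ to drop rank to one and $\tau(u)$ to be tangent to the image curve. For $(3)$, since $u$ is an isometric immersion the $2$-balanced growth of $du$ (with $q=2$) translates into a balanced volume growth on $N_1$; integrating $|\tau(u)|^2$ against this growth forces $\tau(u)\equiv 0$, so $u$ is harmonic, and the standard energy-comparison argument on the nonpositively curved target then gives that $u$ minimizes energy in its homotopy class. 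For $(4)$, once $u$ is harmonic, $x\mapsto {\rm dist}^2(u(x),y_0)$ is subharmonic on $N_1$ because the squared distance is convex on a nonpositively curved target and the composition of a convex function with a harmonic map into such a target is subharmonic; its $2$-balanced growth with $q>2$, combined with \cite[Theorem 5.4]{WLW}, forces this distance squared to be constant, whence $u$ is constant.

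The main obstacle will be the rank-one and tangency conclusion in $(2)$: passing from ``nonzero parallel tension field of a biharmonic isometric immersion into a nonpositively curved ambient'' to the precise rank-one degeneracy requires a pointwise structural argument with the second fundamental form rather than a global vanishing theorem, and care must be taken in isolating the closed-geodesic exception via lifting to $\widetilde N$ so that Hartman-type uniqueness applies cleanly. The remaining ingredients are essentially Bochner-plus-Liouville machinery already developed by the authors.
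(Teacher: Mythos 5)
Your proposal follows essentially the same route as the paper: the paper's entire proof consists of observing that $M$ and $R^m(k,f)$ are simply-connected and nonpositively curved (via Lemma \ref{L:2.2} and the Gauss equation for the totally geodesic $M$) and then invoking the proof of \cite[Theorem 9.1]{WLW}, which is exactly the Bochner-identity-plus-$2$-balanced-growth Liouville machinery you reconstruct. The one point the paper explicitly flags that you do not address is that $M$ and $R^m(k,f)$ need not be complete, so the convexity of the squared distance and the Hartman-type uniqueness must be checked to survive without completeness of the target.
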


 \begin{proof}
We note that both $M$ and $R^m(k,f)$ are simply-connected
manifolds of nonpositive curvature, but are not necessarily
complete. However, the conclusions follow by proceeding exactly as
in the proof of \cite [Theorem 9.1]{WLW}(cf. \cite {WLW2}).
  \end{proof}

\begin{corollary} \label{C:10.1} Let $M$ and $R^m(k,f)\, $ be as in Theorem \ref{T:10.1}.
Let  $u$ be a smooth biharmonic isometric immersion of a complete
manifold $N_1$ into $R^m(k,f) $($resp. $\, M$\, $$)$ with mean
curvature vector field $H$ which has \emph{$2$-balanced} growth.
Then we have:

\begin{itemize}
 \item [(i)] $N_1$ is a minimal submanifold of $R^m(k,f)$ $($resp. of  $M$ $)$ for $u : N_1 \to R^m(k,f)$ $($resp. $u : N_1 \to M)$.

 \item [(ii)] $u$ is $p$-harmonic for every $1 < p <\infty \, .$

  \item [(iii)] $u$ is polyharmonic of order $j$ for every $j \in \{1,2, \cdots \}$.
 \end{itemize}
\end{corollary}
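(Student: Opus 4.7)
The plan is to deduce all three conclusions by applying Theorem~\ref{T:10.1} to the biharmonic isometric immersion $u$, exploiting two standard facts: the identity $\tau(u) = nH$ for an isometric immersion $u : N_1^n \to \tilde M$, and the characterizations of minimal isometric immersions and geodesics recalled in the proof of Corollary~\ref{C:9.1}. First I would observe that the hypothesis ``$H$ has $2$-balanced growth'' immediately translates, via $\tau(u) = nH$, into the corresponding hypothesis on $\tau(u)$, so conclusions (1) and (2) of Theorem~\ref{T:10.1} are available for $u$.

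The core step is to eliminate the two non-harmonic alternatives in Theorem~\ref{T:10.1}(2). If $u$ mapped $N_1$ onto a closed geodesic $\gamma$ in $M$ or in $R^m(k,f)$, then $\gamma$ would be a compact geodesic without boundary, contradicting Corollary~\ref{C:9.1}. If instead $u$ were of rank one, then $n = 1$ and the image would be a curve whose mean curvature vector is the acceleration $\nabla_{\dot\gamma}\dot\gamma$, a vector that is always normal to the curve; combined with the requirement of Theorem~\ref{T:10.1}(2) that $\tau(u) = nH$ be tangent to the image curve, this forces $\tau(u) \equiv 0$, collapsing this subcase into the harmonic one. Consequently, $u$ is either constant or harmonic, and in both cases $H \equiv 0$, which is assertion (i).

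Assertion (ii) then follows from the fact invoked in the proof of Corollary~\ref{C:9.1}, namely \cite[Theorem 1.14(i)]{W}: an isometric immersion is minimal if and only if it is $p$-harmonic for every $1 < p < \infty$. For assertion (iii), once $u$ is harmonic we have $\tau(u) = -d^{\ast}du \equiv 0$, whence $(d + d^{\ast})^j u \equiv 0$ for every $j \geq 2$; therefore the polyharmonic functional $E_{(j)}(u)$ attains its global minimum $0$ at $u$ for every $j \geq 1$, so $u$ is automatically a critical point of $E_{(j)}$, i.e., polyharmonic of order $j$.

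The main obstacle I expect is the careful handling of the rank-one alternative in Theorem~\ref{T:10.1}(2): one must verify rigorously that the simultaneous conditions ``$\tau(u)$ tangent to the image curve'' and ``mean curvature of a unit-speed curve is normal to the curve'' force $\tau(u) \equiv 0$, so that this alternative contributes no genuinely new case beyond the harmonic one. Once this point is settled, the rest is straightforward bookkeeping on top of Theorem~\ref{T:10.1}, Corollary~\ref{C:9.1}, and the cited results from \cite{W}.
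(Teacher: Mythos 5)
Your proof is correct, but it reaches conclusion (i) by a different route than the paper. The paper's proof uses parts (1) and (3) of Theorem \ref{T:10.1}: by (1), $\tau(u)$ is parallel, so $|H|$ is a constant $C$; if $C\neq 0$, then since an isometric immersion has $|du|=\sqrt{\dim N_1}$ identically, the $2$-balanced growth hypothesis on the constant $|H|$ transfers verbatim to the constant $|du|$ (both integrals reduce to $\operatorname{vol}(B(x_0;r))$ up to a constant factor), whence part (3) forces $u$ to be harmonic and $H\equiv 0$, a contradiction; the paper then adds the remark that minimality in $M$ passes to minimality in $R^m(k,f)$ via total geodesy. You instead invoke the trichotomy in part (2) and eliminate the non-harmonic alternatives: the closed-geodesic image is killed by Corollary \ref{C:9.1}, and the rank-one case collapses because an isometric immersion of rank one is a unit-speed curve whose tension field $\nabla_{\dot\gamma}\dot\gamma$ is normal to the image while part (2) says it is tangent, forcing $\tau(u)\equiv 0$. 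Both arguments are sound; the paper's has the advantage of resting only on the cleanly stated parts (1) and (3) and on the single observation that $|du|$ is constant, whereas yours depends on parsing the rather convoluted statement of part (2) (though your reading is the intended one, and your tangent-versus-normal elimination of the rank-one case is a nice self-contained point the paper never has to make). For (ii) you and the paper both cite \cite[Theorem 1.14]{W}; for (iii) the paper again just cites \cite{W}, while your direct argument --- $\tau(u)\equiv 0$ gives $(d+d^\ast)^j u\equiv 0$ for $j\ge 2$, so $u$ globally minimizes $E_{(j)}$ under compactly supported variations --- is a valid and more explicit substitute.
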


\begin{proof}  We first assume the case $u(N_1) \subset M\, ,$ and note that if $u$ is an isometric immersion, then its tension field $\tau (u)$ agrees with its mean curvature $H$ and $|du|= \sqrt {\dim
N_1}\, .$ It follows from Theorem \ref{T:10.1} (1) that either $H
\equiv 0\, ,$ then we have proved (i), or $|H| \equiv C$, a
nonzero constant, i.e. $u$ is not harmonic. But if $|H| \equiv C
\ne 0$, then the growth assumption on $H$ implies the same growth
condition on $|du|= \sqrt {\dim N_1}\,.$ Thus, by Theorem
\ref{T:10.1} (3), $u$ would be harmonic and hence $H \equiv 0$, a
contradiction. This proves that $N_1$ is a minimal submanifold of
$M$,  and hence a minimal submanifold of $R^m(k,f)$ by Theorem
\ref{T:6.1} that $M$ is a totally geodesic submanifold of
$R^m(k,f)$. The same technique also proves the case $u(N_1)
\subset R^m(k,f)\, ,$ and the assertion (i) follows. Now
assertions (ii) and (iii) follow from \cite[Theorem 1.14,
p.637]{W}.
\end{proof}

\begin{theorem}\label{T:10.2}
Let $M$ and $f$ be as in Proposition \ref{P:5.2} or \ref{P:5.3}.
Then there are neither $p$-harmonic  conformal immersions $u : M
\to R^m(k,f)\, , p=\dim N_1\, ,$ nor $p$-harmonic isometric
immersions $u : M \to R^m(k,f)\, ,p
> 1\, .$
\end{theorem}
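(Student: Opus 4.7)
The plan is to reduce both statements to Proposition \ref{P:5.2} or \ref{P:5.3}: in each case I will show that a map of the given form would force a minimal isometric immersion of a Riemannian $n$-manifold whose scalar curvature satisfies the bound \eqref{5.4} (resp.\ \eqref{5.6}) into $R^m(k,f)$, which is forbidden by those propositions. The work therefore amounts to converting the $p$-harmonic hypothesis into minimality under the induced metric.

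For the isometric case, suppose $u: M \to R^m(k,f)$ is a $p$-harmonic isometric immersion with $p>1$ and set $n=\dim M$. Then $|du|^{2}\equiv n$ is constant, so the Euler--Lagrange equation $\operatorname{div}\!\left(|du|^{p-2}du\right)=0$ collapses to $n^{(p-2)/2}\operatorname{div}(du)=0$, i.e.\ $\tau(u)=0$. Since $u$ is isometric, this tension-field equation is exactly the minimality of $u(M)$; equivalently one can invoke \cite[Theorem 1.14, p.\,637]{W}, already cited in the proof of Corollary \ref{C:9.1}. Hence $u$ is a minimal isometric immersion, directly contradicting Proposition \ref{P:5.2} or \ref{P:5.3}.

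For the conformal case, let $u: M \to R^m(k,f)$ be a $p$-harmonic conformal immersion with $p=n=\dim M$, and write $u^{*}g_{R^m(k,f)}=\lambda^{2}g_{M}$ for some $\lambda>0$. The $n$-energy is conformally invariant in dimension $n$, so $u$ remains $n$-harmonic after we replace $g_M$ by the pullback metric $\tilde g := \lambda^{2}g_{M}$. With respect to $\tilde g$, the map $u:(M,\tilde g)\to R^m(k,f)$ is now an \emph{isometric} immersion with $|du|^{2}\equiv n$, so
\begin{equation*}
E_{n}(u) \;=\; \frac{n^{n/2}}{n}\int_{M}dv_{\tilde g}\;=\;n^{\,n/2-1}\,\mathrm{vol}(M,\tilde g).
\end{equation*}
Thus critical points of $E_n$ among such conformal immersions are precisely those whose image is a minimal submanifold. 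Arguing as in the isometric case, $u:(M,\tilde g)\to R^m(k,f)$ is a minimal isometric immersion, and Proposition \ref{P:5.2} or \ref{P:5.3} (applied to $M$ equipped with $\tilde g$, under the standing hypothesis that this is the manifold meeting the scalar-curvature bound) provides the contradiction.

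The only real subtlety I foresee is the conformal case: one must be careful to apply Proposition \ref{P:5.2} or \ref{P:5.3} to $(M,\tilde g)$ rather than $(M,g_M)$, since scalar curvature is not conformally invariant. The isometric half is essentially a one-line reduction once the constancy $|du|^{2}\equiv n$ is exploited; the conformal half hinges on the well-known conformal invariance of $E_n$ in dimension $n$ and the identification of $E_n$ with volume after switching to the pullback metric.
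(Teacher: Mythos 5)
Your argument is essentially the paper's: both cases are reduced to the statement that $u(M)$ is minimal and then played off against Proposition \ref{P:5.2} or \ref{P:5.3}; the paper merely cites Takeuchi \cite{Th} and \cite[Theorem 1.14]{W} for these two reductions, which you instead prove directly (constancy of $|du|$ in the isometric case, conformal invariance of $E_n$ plus passage to the pullback metric in the conformal case), and both reductions are sound. The caveat you raise yourself --- that in the conformal case the scalar-curvature hypothesis of Proposition \ref{P:5.2} or \ref{P:5.3} must be verified for $(M,\tilde g)$ with $\tilde g=\lambda^{2}g_{M}$ rather than for $(M,g_M)$, since scalar curvature is not conformally invariant --- is a genuine point, but the paper's own two-line proof passes over exactly the same issue in silence, so your version, which at least makes the needed hypothesis explicit, is no weaker than the original.
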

\begin{proof}
Suppose on the contrary, then by a Theorem of Takeuchi \cite {Th}
or  \cite [Theorem 1.14] {W}, $u(M)$ would be minimal in
$R^m(k,f)$, contradicting Proposition \ref{P:5.2} or \ref{P:5.3}.
\end{proof}

\section{Quasiregular mappings into ${\bf R} \times_{f} S$}

 Our previous ideas can be naturally applied
to the study of quasiregular mappings. These mappings are
generalizations of complex analytic functions on the plane, to
higher dimensional Euclidean spaces; even more generally to
Riemannian $n$-manifolds. While analytic functions pull back
harmonic (resp. superharmonic) functions on an open subset of
${\bf R}^2$ to harmonic (resp. superharmonic) functions,
quasiregular mappings pull back $n$-harmonic (resp.
$n$-superharmonic) functions on manifolds to
$\mathcal{A}$-harmonic (resp. $\mathcal{A}$-superharmonic)
functions (of type $n$) (cf \cite {GLM} and \cite {H}).
\medskip

 We denote by ${\sc W}_{loc}^{1,p}(M)$ the Sobolev space whose real-valued
functions on $M$ are locally $p$-integrable and have locally
$p$-integrable partial distributional first derivatives. A
continuous mapping $u : M \to N$ between two Riemannian
$n$-manifolds is said to be \emph{quasiregular} if $u$ is in
$W_{loc}^{1,n}(M) ,$ and there exists a constant $1 \le \mathbf{K}
< \infty$ such that the differential $du_x$ and the Jacobian
$J_{u}(x)$ satisfy
\begin{equation} |du_x|^n \le \mathbf{K} J_{u}(x)\label{11.1}\end{equation}
for (a.e.) almost every $x\in M ,$ where  the operator norm of
differential
$$|du_x| = \max  \{ du_x ( \xi) : \xi \in T_x(M) , |\xi|=1 \}\,  .$$  A
quasiregular mapping is said to be \emph{quasiconformal} if it is
a homemorphism. A continuous mapping $u : M \to N$ is said to be a
\emph{quasi-isometry} if $u$ is in $W_{loc}^{1,1}(M) ,$ if
$J_{u}(x) \ge 0\, $ a.e., and if there exists a constant $1 \le
\mathbf{L} < \infty$ such that the differential $du_x$ satisfies
\begin{equation} \frac {1}{\mathbf{L}}|\xi| \le |du_x (\xi )| \le \mathbf{L} |\xi|\label{11.2}\end{equation}
for (a.e.) almost every $x\in M ,$ and $\xi \in T_x(M)\, .$
Examples of quasiregular mapping include isometries,
quasi-isometries (with $\mathbf{K}$ = $\mathbf{L}^{2(n-1)}$),
M\"obius maps, and holomorphic maps from the complex plane to a
Riemann surface.

 We denote by ${\mathcal A}$ a
measurable cross section in the bundle whose fiber at a.e. $x$ in
$M$ is a continuous map ${\mathcal A}_x$ on the tangent space
$T_x(M)$ into $T_x(M)$. We assume further that there are constants
$1 < p < \infty$ and $0 < \alpha \le \beta < \infty$ such that for
a.e. $x$ in $M$ and all $h \in T_x(M)\, ,$ we have
\begin{equation}
\langle {\mathcal A}_x(h),h\rangle _M \ge \alpha |h|^p,
\label{11.3}
\end{equation}
\begin{equation}
|{\mathcal A}_x(h)| \le \beta |h|^{p-1} ,  \label{11.4}
\end{equation}
\begin{equation}
\langle {\mathcal A}_x(h_1)-{\mathcal A}_x(h_2),h_1-h_2\rangle_M
> 0,\quad h_1\ne h_2,\label{11.5}
\end{equation}
and
\begin{equation}
{\mathcal A}_x(\lambda h)\equiv  |\lambda|^{p-2}\lambda {\mathcal
A}_x(h),\quad \lambda \in R \smallsetminus \{0\}.   \label{11.6}
\end{equation}

\noindent A function $f\in W_{loc}^{1,p}(M)$ is a weak solution
(resp. supersolution, subsolution) of the equation
\begin{equation}
{\rm div} {\mathcal A}_x(\nabla f) = 0\; \;( resp.\;  \le 0, \;\;
\ge 0),\label{11.7}
\end{equation}
\noindent if for all nonnegative $\varphi \in C_0^{\infty}(M)$
\begin{equation}
\int _M \langle {\mathcal A}_x(\nabla f), \nabla \varphi\rangle dv
= 0\;\; ( resp.\; \ge 0, \; \; \le 0)  \label{11.8}
\end{equation}

The equation \r{11.7} is called ${\mathcal A}$--$ {\it harmonic}$
equation, and continuous solutions of ({\ref{11.7}}) are called
${\mathcal A}$--$ {\it harmonic}$ (of type $p$).  In the case
${\mathcal A}_x(h)\equiv |h|^{p-2}h$, ${\mathcal A}$--harmonic
functions are $p$--harmonic. A lower (resp. upper) semicontinuous
function $ f : M \to R\, \cup \,\{\infty\}$ (resp. $\{-\infty\}
\,\cup\, R$) is ${\mathcal A}$--${\it superharmonic }$ (resp.
${\mathcal A}$-subharmonic) (of type $p$) if it is not identically
infinite, and it satisfies the ${\mathcal A}$--comparison
principle: i.e., for each domain $D \subset  M$ and for each
function $g \in C(\overline D)$ which is ${\mathcal A}$--harmonic
in $D$, $g \le f$ (resp. $g \ge f$) in $\partial D$ implies $g \le
f$ (resp. $g \ge f$) in $D$. An ${\mathcal A}$--superharmonic
(resp. ${\mathcal A}$--subharmonic) function $f$ is called
$p$-superharmonic (resp. $p$-subharmonic) if ${\mathcal
A}_x(h)\equiv |h|^{p-2}h\, .$

 ${\mathcal A}$--superharmonic and ${\mathcal A}$-subharmonic functions are closely related to
 subsolutions and supersolutions of (\ref{11.7}).  For a discussion of the ${\mathcal A}$-harmonic equation, we refer the reader to J. Heinonen, T. Kipel\"ainen and O.
 Martio's book (\cite {HKM}).\medskip

A complete noncompact manifold $M$ is said to be $\mathcal
A$--parabolic (resp. $p$--parabolic) (of type $p$) if every
nonnegative measurable $\mathcal A$-superharmonic (resp.
$p$-superharmonic) (of type $p$) function is constant, and
$\mathcal A$--hyperbolic (resp. $p$--hyperbolic) (of type $p$)
otherwise.

Throughout this section, we assume $S$ is  a Riemannian
$(m-1)$-manifold of constant sectional curvature $k$. We begin
with a general
\medskip

\begin{theorem}\label{T:11.0} Let $N_1$  be an $\mathcal{A}_1$-parabolic manifold  (of type $m$)
and $N_2$  be an $\mathcal{A}_2$-hyperbolic manifold  (of type
$m$). Then there does not exist any quasiregular mapping $u$ from
$N_1$ into $N_2\, ,$ unless it is a constant.
\end{theorem}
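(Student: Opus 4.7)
The plan is to proceed by contradiction, leveraging the fundamental pullback property of quasiregular mappings under $\mathcal{A}$-superharmonic functions of type $m$ established by Heinonen--Kipel\"ainen--Martio \cite{HKM} (see also \cite{GLM, H}): for a quasiregular mapping $u: N_1\to N_2$ between Riemannian $m$-manifolds, the composition $v\circ u$ of an $\mathcal{A}_2$-superharmonic function $v$ of type $m$ is an $\mathcal{A}_1$-superharmonic function of type $m$, where $\mathcal{A}_1$ depends only on $u$ and $\mathcal{A}_2$. This is a conformally invariant phenomenon that works precisely in the borderline case where the exponent $p=m$ matches the common dimension.

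Suppose, for contradiction, that $u$ is nonconstant. Then, by Reshetnyak's theorem, $u$ is open and discrete, so $u(N_1)$ is a nonempty open subset of $N_2$. Using the $\mathcal{A}_2$-hyperbolicity of $N_2$, I would select a nonconstant, nonnegative, measurable $\mathcal{A}_2$-superharmonic function $v$ on $N_2$; if needed, replacing $v$ by the truncation $\min(v,M)$ for large $M$, I may assume $v$ is bounded. Applying the pullback property would then yield a nonnegative $\mathcal{A}_1$-superharmonic function $v\circ u$ of type $m$ on $N_1$.

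The next step is to show that $v\circ u$ is nonconstant. If instead $v\circ u\equiv c$, then $v\equiv c$ on the nonempty open set $u(N_1)$; but an $\mathcal{A}_2$-superharmonic function on a connected manifold that coincides with a constant on an open set must be globally constant, by the strong minimum principle together with the unique continuation / capacity properties of $\mathcal{A}$-superharmonic functions (cf. \cite{HKM}). This would contradict the choice of $v$. Hence $v\circ u$ is a nonconstant, nonnegative $\mathcal{A}_1$-superharmonic function on $N_1$, which directly violates the $\mathcal{A}_1$-parabolicity of $N_1$.

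The main obstacle is invoking the pullback theorem correctly: it relies on the critical matching of the exponent $m$ with the dimension, on the quasiregularity estimate $|du_x|^m\le \mathbf{K}\,J_u(x)$ from \eqref{11.1}, and on a careful change-of-variables in the weak formulation \eqref{11.8}. A secondary technical point is justifying that $v\circ u$ must be nonconstant, which is where openness of $u$ (via Reshetnyak) combined with unique continuation for $\mathcal{A}$-superharmonic functions is essential. Once these two ingredients are granted, the contradiction with $\mathcal{A}_1$-parabolicity is immediate from the definitions.
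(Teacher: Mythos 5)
Your overall strategy (pull back a superharmonic function and contradict parabolicity) is the right one, but there is a genuine gap at the crucial step. The pullback theorem for quasiregular mappings produces a function $v\circ u$ that is superharmonic for the \emph{pulled-back} operator $\mathcal{A}_3 = u^{\#}\mathcal{A}_2$, which is determined by $u$ and $\mathcal{A}_2$ and in general has nothing to do with the operator $\mathcal{A}_1$ given in the hypothesis. Your proposal silently identifies the two (``where $\mathcal{A}_1$ depends only on $u$ and $\mathcal{A}_2$''), so what you actually contradict is $\mathcal{A}_3$-parabolicity, not the assumed $\mathcal{A}_1$-parabolicity. The paper closes exactly this gap: from the nonconstant positive $\mathcal{A}_3$-supersolution it deduces (via Holopainen) that some compact set $C\subset N_1$ has positive $m$-capacity, i.e.\ $\inf_\varphi\int_{N_1}|\nabla\varphi|^m\,dV>0$; then the lower structure bound \eqref{11.3} for $\mathcal{A}_1$ converts this into positivity of the $\mathcal{A}_1$-capacity, and an exhaustion argument (Harnack, H\"older estimates, Arzel\`a--Ascoli) manufactures a nonconstant positive $\mathcal{A}_1$-superharmonic Green function on $N_1$, which is the actual contradiction. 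The underlying point --- that $\mathcal{A}$-parabolicity of type $m$ is an operator-independent, capacity-theoretic property --- is precisely what has to be proved here; you cannot assume it.

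A secondary flaw: your argument that $v\circ u$ is nonconstant relies on a ``unique continuation'' principle for $\mathcal{A}$-superharmonic functions that is false. A truncation $\min(v,c)$ of a Green function is superharmonic and constant on an open neighborhood of the pole without being globally constant, so constancy of $v$ on the open set $u(N_1)$ does not force global constancy. Nonconstancy of the pullback has to be arranged by choosing $v$ appropriately (e.g.\ a Green function or capacity potential whose pole, respectively whose level transition, lies inside the open image $u(N_1)$), which is how the cited sources of Holopainen handle it. Neither defect is fatal to the strategy, but both must be repaired before the proof is complete.
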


The case $\mathcal{A}_1 (\nabla \varphi)= \mathcal{A}_2 (\nabla
\varphi)= |\nabla \varphi|^{m-2}\nabla \varphi\, $ is due to T.
Coulhon, I. Holopainen and L. Saloff-Coste \cite {CHS}:

\begin{proposition} \label{P:11.0}
Every quasiregular mapping $u$ from an $m$-parabolic manifold into
an $m$-hyperbolic manifold is constant.
\end{proposition}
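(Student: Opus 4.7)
The plan is to argue by contradiction and exploit two standard ingredients: the pullback property of quasiregular mappings on $m$-superharmonic functions, and the fact that parabolicity of a fixed type depends on the exponent, not on the particular structure function. Suppose $u:N_1\to N_2$ is a nonconstant quasiregular mapping. By the Reshetnyak theorem on discreteness and openness of quasiregular maps, $u(N_1)$ is a nonempty open subset of $N_2$.

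Since $N_2$ is $m$-hyperbolic, there exists a nonnegative nonconstant $m$-superharmonic function $v$ on $N_2$. After a cutoff or localization argument (for instance, replacing $v$ by the $m$-potential of a small compact subset of $u(N_1)$ on which $v$ is not constant), I may assume $v$ is nonconstant on the open set $u(N_1)$. The key analytic input is now the quasiregular pullback theorem of \cite{GLM,HKM,H}: for any quasiregular map $u:N_1\to N_2$ between Riemannian $m$-manifolds and any $m$-superharmonic $v$ on $N_2$, the composition $w:=v\circ u$ is $\mathcal{A}$-superharmonic of type $m$ on $N_1$, where the structure constants $\alpha,\beta$ appearing in \eqref{11.3}--\eqref{11.4} depend only on $m$ and on the quasiregularity constant $\mathbf{K}$ of \eqref{11.1}. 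By openness of $u$ together with nonconstancy of $v$ on $u(N_1)$, $w$ is nonnegative and nonconstant.

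To reach a contradiction I would invoke the invariance of $m$-parabolicity under change of structure: if $N_1$ is $m$-parabolic, then $N_1$ is $\mathcal{A}$-parabolic for every $\mathcal{A}$ satisfying \eqref{11.3}--\eqref{11.6} with $p=m$. This can be proved by comparing the $\mathcal{A}$-capacity of a condenser with the classical $m$-capacity using the two-sided bounds \eqref{11.3} and \eqref{11.4}: vanishing of one forces vanishing of the other, and parabolicity of a given type is equivalent to the vanishing of the corresponding capacity of every compact set. Granting this invariance, the nonnegative nonconstant $\mathcal{A}$-superharmonic function $w$ must be constant, a contradiction.

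The main obstacle is the pullback theorem itself. Unlike the linear case $p=2$, where a Riesz-type change of variables suffices, for $p=m$ one must carry $w$ across the branch set of $u$ and show that it satisfies a genuine $\mathcal{A}$-harmonic inequality whose structure matrix is built from $(Du)^{t}(Du)$; handling the branch set and verifying the monotonicity and homogeneity conditions \eqref{11.5} and \eqref{11.6} is the technical heart of the argument. Once the pullback theorem is in hand, the capacity-invariance step reduces to a routine application of the structure inequalities, and the conclusion follows immediately.
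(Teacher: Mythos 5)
Your argument is correct and is essentially the paper's own: the paper proves the more general Theorem \ref{T:11.0} by pulling back truncated supersolutions $f_j=\min\{f,j\}$ of the $\mathcal{A}_2$-harmonic equation through $u$ to obtain a nonconstant positive supersolution of a pulled-back structure on $N_1$, and then uses the bound \eqref{11.3} to transfer positivity of the $m$-capacity of a compact set to positivity of the $\mathcal{A}_1$-capacity, contradicting parabolicity via a Green-function construction. Your localization of $v$ to a potential and your explicit appeal to the structure-invariance of $m$-parabolicity are just a slightly different packaging of the same two ingredients, namely the quasiregular pullback theorem of Granlund--Lindqvist--Martio and Holopainen and the capacity comparison via the structure bounds \eqref{11.3}--\eqref{11.4}.
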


Theorem \ref{T:11.0} recaptures classical {\it Picard's Theorem},
which states that every analytic function $u$ on the complex plane
$\mathbf{C}$ omits at least two different values must be constant.
This is the case for its lift $\tilde{u}: \mathbf{C} \to
\mathbf{D}\, ,$ where $m=2\, , \mathbf{K}=1\, $ in \r{11.1},
${A_1}_x(h) = {A_2}_x(h) =h, N_1 = \mathbf{C}\, $ is parabolic,
and $N_2\, = \mathbf{D}\, ,$ the universal cover of $\mathbf{C}
\backslash \{z_1, z_2,\cdots \}\, ,$ is hyperbolic (cf. also
\cite{W3}).

\begin{proof}
Suppose on the contrary. Let $f$ be a nonconstant positive
$\mathcal{A}_2$-superharmonic function  (of type $m$) on $N_2\, ,$
and $f_j $ be a nonconstant supersolution of
$\mathcal{A}_2$-harmonic equations, where $f_j = \min \{f, j\}$
and $j$ is a positive integer (cf. \cite {HKM}, 7.2,7.20). Then
$u$ would pull back $f_j$ on $N_2$ to a nonconstant positive
supersolution $f_j \circ u$ of $\mathcal{A}_3$-harmonic equations
on $N_1\, ,$ where $\mathcal{A}_3$ is the pull-back of
$\mathcal{A}_2$ under $f_j \circ u$ satisfying \r{11.3}-\r{11.6}
(cf. \cite {H},(2.9a),(2.9b)). It follows that there would exist a
compact set $C \subset N_1$ such that $\inf_{\varphi} \int _{N_1}
|\nabla \varphi|^{m}  dV
> 0\, ,$ where the
infimum is taken over all  $\varphi \ge 1 $ on $C$ and $\varphi
\in C_0^{\infty} (N_1)\,$ (cf. \cite {H},5.2). In view of
\r{11.3}, we would have
\begin{equation}\begin{array}{rll} \inf _{\varphi} \int _{N_1} \< \mathcal{A}_1 (\nabla \varphi),
\nabla \varphi \> dV & \ge \inf _{\varphi} \alpha \int _{N_1}
|\nabla \varphi|^{m}  dV \\&
> 0\end{array}\end{equation}
where the infimum is taken over all  $\varphi \ge 1 $ on $C$ and
$\varphi \in C_0^{\infty} (N_1)\, .$ By an exhaustion argument
(cf. e.g. \cite {WLW},5), based on Harnack's principle, H\"older
continuity estimates, and Arzela-Ascoli Theorem, there would exist
a nonconstant positive $\mathcal{A}_1$-superharmonic function (of
type $m$) on $N_1\,  $( called \emph {Green function} on $M$ for
the operator $\mathcal{A}_1$ ) (cf. \cite {H}, 3.27),
contradicting the hypothesis that $N_1$ is an
$\mathcal{A}_1$-parabolic manifold
 (of type $m$).
\end{proof}

\begin{corollary} \label{C:11.0.1}
Every $m$-harmonic morphism $u$ from an $m$-parabolic manifold
into an $m$-hyperbolic manifold is constant.
\end{corollary}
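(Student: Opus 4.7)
The plan is to reduce Corollary \ref{C:11.0.1} to Theorem \ref{T:11.0} (or its Proposition \ref{P:11.0} specialization) by exploiting the defining pull-back property of $m$-harmonic morphisms. I would argue by contradiction and assume that $u$ is a nonconstant $m$-harmonic morphism from an $m$-parabolic manifold $N_1$ into an $m$-hyperbolic manifold $N_2$.

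First, since $N_2$ is $m$-hyperbolic, the discussion preceding Theorem \ref{T:11.0} (and \cite{HKM}, 7.2, 7.20) supplies a nonconstant nonnegative $m$-superharmonic function $f$ on $N_2$, together with the bounded truncations $f_j=\min\{f,j\}$ which are nonnegative supersolutions of the $m$-harmonic equation ${\rm div}(|\nabla f_j|^{m-2}\nabla f_j)=0$.

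Second, I would promote the defining property of $m$-harmonic morphisms (which pulls back $m$-harmonic functions to $m$-harmonic functions) from harmonic to superharmonic functions. Concretely, on any relatively compact domain $D\subset N_1$ and any $h\in C(\overline D)$ that is $m$-harmonic in $D$ with $h\le f\circ u$ on $\partial D$, I would solve the Dirichlet problem for the $m$-Laplacian over $u(D)\subset N_2$ with boundary data dominated by $f$, and then use the morphism property to pull the resulting harmonic solution back to $D$; the $\mathcal{A}$-comparison principle on $N_2$ (applied to $f$ against the solution) yields the corresponding inequality on $D$, which verifies the $m$-comparison principle for $f\circ u$. Alternatively, one may mimic the proof of Theorem \ref{T:11.0} directly: pulling the operator ${\rm div}(|\nabla\cdot|^{m-2}\nabla\cdot)$ back through the morphism $u$ produces, via Heinonen's composition formulas \cite{H},(2.9a),(2.9b), a measurable cross-section $\mathcal{A}_3$ on $N_1$ satisfying \r{11.3}--\r{11.6}, and the truncations $f_j\circ u$ are nonconstant nonnegative $\mathcal{A}_3$-supersolutions on $N_1$.

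Third, from either route I would obtain a nonconstant nonnegative $\mathcal{A}_3$-superharmonic function (equivalently, after running the exhaustion/Harnack/H\"older/Arzela--Ascoli argument of \cite{WLW},5 and \cite{H},3.27, a nonconstant positive $\mathcal{A}_1$-superharmonic, i.e.\ $m$-superharmonic, function) on $N_1$, contradicting the assumption that $N_1$ is $m$-parabolic. The main technical obstacle will be the second step: the definition of $m$-harmonic morphism recorded in section 3 is phrased only for harmonic functions, so I must either invoke the $\mathcal{A}$-harmonic composition formalism of \cite{H} to pass to superharmonic functions, or approximate $f$ by suitable barriers and invoke the $m$-comparison principle. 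Nonconstancy of $f\circ u$ presents a minor but genuine issue: it is ensured because $u$ is nonconstant and $f$ is locally nonconstant on $N_2$, so one can restrict attention to a small ball in $N_2$ whose preimage contains points with distinct $f$-values to feed into the exhaustion argument.
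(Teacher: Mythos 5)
Your overall architecture (contradiction by pulling a nonconstant positive supersolution back from the $m$-hyperbolic target to the $m$-parabolic source) is the same engine that drives Theorem \ref{T:11.0}, but the paper does not re-run that engine for morphisms: its entire proof of Corollary \ref{C:11.0.1} is the observation that every $m$-harmonic morphism between $m$-manifolds is conformal (citing \cite{OW}), hence quasiregular with $\mathbf{K}=1$ in \eqref{11.1}, so that Theorem \ref{T:11.0} (equivalently Proposition \ref{P:11.0}) applies verbatim as a black box.

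The step you yourself flag as the main obstacle --- promoting the pull-back property from $m$-harmonic to $m$-superharmonic functions --- is exactly where that citation is needed, and neither of your two workarounds closes it as stated. The first (solving the Dirichlet problem ``over $u(D)$'' and invoking the $\mathcal{A}$-comparison principle) is problematic because $u(D)$ need not be open or a domain, and a priori you do not know that $u$ is an open or discrete map; openness of $u$ is itself a consequence of nonconstant quasiregularity, which you have not yet established. The second (Heinonen's composition formulas (2.9a),(2.9b) producing the pulled-back operator $\mathcal{A}_3$ satisfying \eqref{11.3}--\eqref{11.6}) is circular in the same way: those formulas are stated for quasiregular mappings, so to invoke them you must first know $u$ is quasiregular. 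The missing ingredient in both cases is the single fact the paper cites: by \cite{OW}, a $p$-harmonic morphism between manifolds of equal dimension $m=p$ is conformal, i.e.\ $|du_x|^m = J_u(x)$ a.e., which is \eqref{11.1} with $\mathbf{K}=1$. Once you insert that, your third step becomes unnecessary --- you simply quote Theorem \ref{T:11.0} --- and your nonconstancy worry also evaporates, since the Green-function construction in the proof of Theorem \ref{T:11.0} already handles it.
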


\begin{proof}
It follows from the fact that every $m$-harmonic morphism $u$
between $m$-manifolds is conformal (cf. \cite {OW}), and hence
quasiregular (in which $\mathbf{K}=1$ in \r{11.1}).
\end{proof}

\begin{theorem}\label{T:11.1} Let $f$ be a positive function on the Euclidean line ${\bf R}$
satisfying $\min \{\frac{f''}{f}, \frac{f'{}^2-k}{f^2}\} \ge a^2$
with $ a > 0 \, .$ Then there does not exist any quasiregular
mapping $u$ from any $\mathcal{A}$-parabolic manifold $N$ (of type
$m$) into ${\bf R} \times_f S,$ unless it is a constant.
\end{theorem}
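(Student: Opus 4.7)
The plan is to reduce Theorem \ref{T:11.1} to Theorem \ref{T:11.0} by verifying that the target warped product $\tilde M := {\bf R}\times_f S$ is $\mathcal{A}_2$-hyperbolic of type $m$ for the standard $m$-Laplace operator $\mathcal{A}_2(h)=|h|^{m-2}h$. Since $N$ is assumed $\mathcal{A}$-parabolic of type $m$, Theorem \ref{T:11.0} applied with $(N_1,\mathcal{A}_1)=(N,\mathcal{A})$ and $(N_2,\mathcal{A}_2)=(\tilde M,|\,\cdot\,|^{m-2}(\cdot))$ will immediately force $u$ to be constant.

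First I would use Lemma \ref{L:2.2} to bound the sectional curvatures of $\tilde M$ uniformly above by $-a^2$. Given any $2$-plane $\pi\subset T_{(t,q)}\tilde M$, because the horizontal distribution spanned by $\partial_t$ is one-dimensional we may choose an orthonormal basis $\{e_1,e_2\}$ of $\pi$ with $e_2$ vertical unit and $e_1=\alpha\,\partial_t+v$, where $v$ is vertical, $v\perp e_2$, and $\alpha^2+\|v\|^2=1$. A direct computation combining the four identities of Lemma \ref{L:2.2} with the skew-symmetry $\tilde R(X,Y)=-\tilde R(Y,X)$ yields
\begin{equation*}
K(\pi)\;=\;-\alpha^2\,\frac{f''}{f}\,+\,(1-\alpha^2)\,\frac{k-f'^{\,2}}{f^2}
\;=\;-\alpha^2\,\frac{f''}{f}\,-\,(1-\alpha^2)\,\frac{f'^{\,2}-k}{f^2}\;\le\;-a^2,
\end{equation*}
by the hypothesis $\min\{f''/f,(f'^{\,2}-k)/f^2\}\ge a^2$ and $0\le\alpha^2\le 1$. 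Since ${\bf R}$ is complete and $f>0$, $\tilde M$ is thus a complete, simply connected Cartan--Hadamard $m$-manifold of pinched negative curvature $K\le -a^2<0$.

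Next I would establish the $m$-hyperbolicity of $\tilde M$. By the Hessian/Laplacian comparison theorem for Cartan--Hadamard manifolds with $K\le -a^2$, the distance function $r(x)=\mathrm{dist}(x_0,x)$ from a fixed basepoint $x_0\in\tilde M$ satisfies $\Delta r\ge (m-1)\,a\coth(ar)$ on $\tilde M\setminus\{x_0\}$. A direct computation of $\Delta_m\varphi$ for the radial function $\varphi(x)=\exp(-\lambda\,r(x))$ with sufficiently small $\lambda>0$, followed by a standard smoothing near $x_0$, shows that $\varphi$ is a nonconstant positive $m$-superharmonic function on $\tilde M$; equivalently, $\tilde M$ carries a positive Green function for the $m$-Laplacian (cf.\ \cite{CHS}, \cite{HKM}) and is therefore $m$-hyperbolic. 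Applying Theorem \ref{T:11.0} then rules out any nonconstant quasiregular $u:N\to\tilde M$.

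The main obstacle is the passage from the ``coordinate'' curvature identities of Lemma \ref{L:2.2} to a uniform upper bound on every $2$-plane; this is where the one-dimensionality of the horizontal factor ${\bf R}$ is essential, since it forces any such $\pi$ to be spanned by one purely vertical vector and one vector with at most a $\partial_t$-component, making the sectional curvature an explicit convex combination of the two extreme values $-f''/f$ and $(k-f'^{\,2})/f^2$. Once the curvature pinching is in hand, $m$-hyperbolicity is standard Hadamard-manifold material, and the quasiregular rigidity follows directly from the framework of Theorem \ref{T:11.0}.
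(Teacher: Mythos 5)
Your proposal is correct, and its overall architecture coincides with the paper's: bound the sectional curvature of $\tilde M={\bf R}\times_f S$ above by $-a^2$ using Lemma \ref{L:2.2}, show $\tilde M$ is $m$-hyperbolic, and conclude via Theorem \ref{T:11.0} with $\mathcal{A}_2(h)=|h|^{m-2}h$. The difference lies in the middle step. The paper integrates the Laplacian comparison $\Delta r\ge (m-1)a\coth(ar)\ge(m-1)a$ over a relatively compact domain to get the linear isoperimetric inequality ${\rm Area}(\partial\Omega)\ge (m-1)a\,{\rm vol}(\Omega)$, checks that the isoperimetric profile $\Psi$ satisfies $\int^\infty \Psi(t)^{-p/(p-1)}\,dt<\infty$, and invokes Troyanov's criterion \cite{Tr} to get $p$-hyperbolicity for every $p>1$ at once (this is what the paper later records as Proposition \ref{P:11.1}). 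You instead feed the same Laplacian comparison into an explicit radial barrier: for $\varphi=e^{-\lambda r}$ one has $\Delta_m\varphi=-\lambda^{m-1}e^{-(m-1)\lambda r}\bigl(\Delta r-(m-1)\lambda\bigr)\le 0$ for $0<\lambda\le a$, giving a nonconstant positive $m$-supersolution directly. Your route is more elementary (no isoperimetric profile, no external parabolicity test) but is tailored to the single exponent $p=m$, and it leaves one small loose end at the basepoint where $r$ fails to be smooth; rather than ``smoothing,'' the clean fix is to replace $\varphi$ by $\min\{e^{-\lambda\varepsilon},e^{-\lambda r}\}$, which is constant near $x_0$ and agrees with $\varphi$ outside $B(x_0,\varepsilon)$, and is $m$-superharmonic by locality of the comparison-principle definition (cf. \cite{HKM}); alternatively one notes that points have zero $m$-capacity so the bounded supersolution extends. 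A genuine plus of your write-up is the explicit verification, omitted in the paper, that every $2$-plane has curvature a convex combination $-\alpha^2 f''/f-(1-\alpha^2)(f'^{\,2}-k)/f^2\le -a^2$, which is exactly where the one-dimensionality of the base is used.
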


\begin{proof}  By virtue
of Lemma \ref{L:2.2}, ${\bf R} \times_f S$ is a complete
simply-connected manifold with sectional curvature $\overline K
\le -a^2\, .$ Then, for any domain $\Omega$ relatively compact in
${\bf R} \times_f S$ with smooth boundary $\partial \Omega \, ,$
$x_0 \in \Omega $ and $r(x) = dist (x_0, x)$, we have via Gauss'
lemma, Stokes' Theorem and the Hessian Comparison Theorem that
\begin{equation}\begin{aligned} & \int _{\partial \Omega} 1
\, dS \ge \int _{\partial \Omega} \< \nabla r, \nu \> dS = \int _{
\Omega} \Delta r dV \\&\hskip.3in  \ge (m-1) \int _{
\Omega}\text{\small $ \frac {\cosh ar}{\frac 1a \sinh ar} $} dV
\ge (m-1)\int _{ \Omega} a
dV.\end{aligned}\label{11.8}\end{equation} Hence,
\begin{equation} {\rm Area} (\partial \Omega) \ge (m-1)a \,
{\rm vol}(\Omega).\label{11.9}\end{equation} Now set
\[\Psi(t) = \inf\big \{  {\rm Area} (\partial \Omega):  \Omega \subset
\subset R \times_f S \, ,  \partial \Omega \in C^{\infty}  \, ,
{\rm vol} (\Omega) \ge t \big\}. \] Then, for any $p \in (1,
\infty)\, ,$ $\Psi(t)$ satisfies
\begin{equation} \int _{t_0}^{\infty} \frac {1}{\Psi(t)^{\frac {p}{p-1}}} dt < \infty \label{11.10}\end{equation}
where $t_0
> 0$ is a constant.  It follows from a Theorem of Troyanov \cite {Tr} that
there exists a nonconstant positive supersolution of $p$-harmonic
equation defined on ${\bf R} \times_f S\, ,$ or ${\bf R} \times_f
S\, $ is $p$-hyperbolic for every $p
> 1\, .$  Choose $p = m$, and the assertion follows from Theorem
\ref{T:11.0} (in which $\mathcal{A}_1 = \mathcal{A}\, ,$ and
$\mathcal{A}_2 (h)\equiv |h|^{m-2}h \, $).
\end{proof}

\begin{corollary} \label{C:11.1} Let $f$ and $N$ be as in Theorem \ref{T:11.1}, and $M$ be a
totally geodesic $n$-submanifold of $\, R^m(k,f)\, .$ Then (i)
Every quasiregular mapping $u$ from $N$ into  $\, R^m(k,f)\, $ is
constant.   (ii)
Every quasiregular mapping  from an $\mathcal{A}$--parabolic
$n$-manifold (of type $n$)  into $M\, $ is constant.
\end{corollary}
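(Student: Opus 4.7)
The plan is to treat the two parts by essentially the same mechanism that powered Theorem \ref{T:11.1}, just varying the ambient dimension and invoking the general quasiregular rigidity Theorem \ref{T:11.0}.

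For part $(i)$, I would simply observe that the statement is a direct instance of Theorem \ref{T:11.1}. The hypotheses on $f$ and $N$ are exactly those of Theorem \ref{T:11.1}, while the target $R^m(k,f) = I \times_f S^{m-1}(k)$ coincides with $\mathbf{R}\times_f S$ in that theorem, since the phrase "$f$ as in Theorem \ref{T:11.1}" forces $I = \mathbf{R}$. Hence any quasiregular $u : N \to R^m(k,f)$ must be constant, and no further work is required.

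For part $(ii)$, my strategy is to promote the ambient curvature bound to $M$ itself and then rerun the Troyanov argument inside $M$ with $n$ replacing $m$. By Lemma \ref{L:2.2} together with the hypothesis $\min\{f''/f,\,(f'{}^{2}-k)/f^{2}\}\ge a^{2}$, the ambient $R^m(k,f)$ is a complete simply-connected manifold with sectional curvature $\le -a^{2} < 0$. Since $M$ is totally geodesic in this Cartan--Hadamard manifold, the Gauss equation gives $\mathrm{Sec}^{M} \le -a^{2}$; moreover, a complete totally geodesic submanifold of a Cartan--Hadamard manifold is itself Cartan--Hadamard, so $M$ inherits completeness, simply-connectedness, and strictly negative sectional curvature.

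With these properties in hand, for any relatively compact smooth domain $\Omega \subset\subset M$ and any $x_0 \in \Omega$, I would apply the Hessian Comparison Theorem to the distance function $r(x) = \mathrm{dist}_M(x_0,x)$, combined with Gauss' lemma and Stokes' theorem, to obtain the linear isoperimetric inequality $\mathrm{Area}(\partial\Omega) \ge (n-1)\,a\cdot \mathrm{vol}(\Omega)$, exactly as in \eqref{11.8}--\eqref{11.9} but with $m$ replaced by $n$. Defining $\Psi(t)$ as in the proof of Theorem \ref{T:11.1}, this inequality yields $\int_{t_0}^{\infty}\Psi(t)^{-p/(p-1)}\,dt < \infty$ for every $p > 1$, so Troyanov's theorem produces a nonconstant positive supersolution of the $p$-harmonic equation on $M$ for every $p>1$; in particular $M$ is $n$-hyperbolic. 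Theorem \ref{T:11.0}, applied with $\mathcal{A}_1 = \mathcal{A}$ and $\mathcal{A}_2(h) \equiv |h|^{n-2}h$, then forces any quasiregular $u : N \to M$ to be constant. The one delicate point to watch is verifying the Cartan--Hadamard structure on $M$ so that Hessian comparison and the Troyanov criterion genuinely apply; this is precisely what the totally geodesic hypothesis buys us, and I expect it to be the main (though not deep) obstacle.
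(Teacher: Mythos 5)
Your proposal is correct and follows essentially the same route as the paper: part (i) is quoted directly to Theorem \ref{T:11.1}, and part (ii) uses the totally geodesic hypothesis to transfer the bound $\mathrm{Sec}\le -a^2$ to $M$, deduces $n$-hyperbolicity of $M$ (the paper asserts this in one line, implicitly via the same Hessian-comparison/Troyanov argument you spell out, later recorded as Proposition \ref{P:11.1}), and concludes with Theorem \ref{T:11.0}. The only difference is that you make explicit the isoperimetric estimate and the Cartan--Hadamard structure of $M$ that the paper's two-line proof leaves implicit.
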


\begin{proof}
(i) In view of (\ref{11.1}), $u : N \to R^m(k,f)\, $ is
quasiregular as a mapping into $ R \times_f S\, ,$ and hence a
constant by Theorem \ref{T:11.1}. (ii)
By the totally geodesic assumption, $M$ is an $n$-manifold with
sectional curvature bounded above by $-a^2\, ,$ and hence $M$ is
$n$-hyperbolic. The assertion follows from Theorem \ref{T:11.0}.
\end{proof}

\begin{corollary} \label{C:11.2} Let $f$ be as in Theorem \ref{T:11.1}.
If $N$ is a Riemannian $n$-manifold of nonnegative Ricci
curvature, then there does not exist any quasiregular mapping $u$
from $N$  into  ${\bf R} \times_f S,$ unless it is a constant.
\end{corollary}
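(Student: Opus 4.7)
The plan is to reduce Corollary~\ref{C:11.2} to Theorem~\ref{T:11.1} by verifying that $N$ is $m$-parabolic, and then invoking that Theorem with $\mathcal{A}(h) = |h|^{m-2}h$.

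First, the definition \r{11.1} of a quasiregular mapping requires source and target to have the same dimension, so the existence of a quasiregular $u : N \to \mathbf{R} \times_f S$ forces $\dim N = m$. Next, by the Bishop--Gromov volume comparison theorem, nonnegative Ricci curvature on the $m$-manifold $N$ yields the Euclidean-type upper bound
\begin{equation*}
\mathrm{vol}(B(x_0;r)) \le \omega_m\, r^m \qquad \text{for all } x_0 \in N,\; r>0,
\end{equation*}
where $\omega_m$ is the volume of the Euclidean unit $m$-ball.

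Polynomial volume growth of order exactly $m$ implies that $N$ is $m$-parabolic. The cleanest proof is a direct $m$-capacity estimate with logarithmic cut-offs: fix a compact $K \subset B(x_0;1)$, and for $R > e$ define
\begin{equation*}
\varphi_R(x) = \min\!\left\{ 1,\ \left(\frac{\log(R/r(x))}{\log R}\right)_{\!+}\right\}, \qquad r(x) = \mathrm{dist}(x_0,x),
\end{equation*}
so that $\varphi_R \equiv 1$ on $K$ and $\varphi_R$ has compact support. Then $|\nabla \varphi_R| \le 1/(r \log R)$ on the annulus $\{1 \le r(x) \le R\}$, and integrating by parts against the above volume bound gives
\begin{equation*}
\int_N |\nabla \varphi_R|^m\, dv \;\le\; \frac{\omega_m(1 + m\log R)}{(\log R)^m} \;\longrightarrow\; 0 \qquad \text{as } R \to \infty,
\end{equation*}
so $\mathrm{cap}_m(K) = 0$. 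Since $K$ is arbitrary, $N$ is $m$-parabolic. (Alternatively one may cite the classical fact, well known from Holopainen's work and \cite{CHS}, that polynomial volume growth of degree at most $p$ implies $p$-parabolicity.)

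Finally, Theorem~\ref{T:11.1} applied with $\mathcal{A}(h) = |h|^{m-2}h$ forces $u$ to be constant. The only substantive step is the capacity/parabolicity calculation; the rest is dimension matching and Bishop--Gromov. The main potential obstacle is simply selecting an $m$-parabolicity criterion appropriate to the $\mathcal{A}$-harmonic framework of Section~5, but the logarithmic cut-off argument above avoids any appeal to isoperimetric profiles and works directly from the volume comparison.
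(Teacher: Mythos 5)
Your proof is correct and follows essentially the same route as the paper: Bishop's volume comparison gives Euclidean-type volume growth, this yields parabolicity of $N$, and Theorem \ref{T:11.1} then forces $u$ to be constant. The only difference is that where the paper cites $\mathcal{A}$-superharmonic estimates from \cite{WLW} to conclude $\mathcal{A}$-parabolicity for a general operator $\mathcal{A}$ of type $m$, you give a self-contained logarithmic cut-off capacity computation establishing $m$-parabolicity, i.e.\ the special case $\mathcal{A}(h)=|h|^{m-2}h$, which is all that Theorem \ref{T:11.1} requires.
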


\begin{proof}
By virtue of Bishop's Volume Comparison Theorem and
$\mathcal{A}$--superharmonic estimates,  $N$ is
$\mathcal{A}$--parabolic (cf. \cite [Corollary 3.3]{WLW}). The
assertion follows from Theorem \ref{T:11.1}.
\end{proof}

\begin{corollary} \label{C:11.3} Let $f$ be as in Theorem \ref{T:11.1}, and let $f_1$ be a positive concave function on the Euclidean line ${\bf R}$
satisfying $f_1^{\prime}\, ^2 \le k\, . $ Then there is neither
any nonconstant quasiregular mapping $u$ from ${\bf R}
\times_{f_{1}} S\, $ into ${\bf R} \times_f S\, ,$ nor nonconstant
quasiregular mapping $u_1 : M_1 \to M$ between complete totally
geodesics $n$-submanifolds $M_1 ( \subset {\bf R} \times_{f_{1}} S
)\, $ and $ M ( \subset {\bf R} \times_{f} S )\,  .$
\end{corollary}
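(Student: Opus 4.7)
The plan is to reduce the statement to the two corollaries already established by showing that the source manifold in each case has nonnegative Ricci curvature, hence is $\mathcal{A}$-parabolic of the appropriate type, while the target automatically satisfies the hyperbolic-curvature hypothesis of Theorem \ref{T:11.1}. All curvature information about the source will be extracted from Lemma \ref{L:2.2}.

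First I would apply Lemma \ref{L:2.2} to $\tilde M_1:={\bf R}\times_{f_1}S$. The concavity hypothesis $f_1''\le 0$ makes the ``radial'' sectional curvatures $-f_1''/f_1$ nonnegative, and the bound $f_1'{}^2\le k$ makes the ``tangential'' sectional curvatures $(k-f_1'{}^2)/f_1^2$ nonnegative. Consequently $\tilde M_1$ has nonnegative sectional curvature, and in particular nonnegative Ricci curvature. The first assertion then follows by a direct application of Corollary \ref{C:11.2} with $N=\tilde M_1$.

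For the second assertion, the Gauss equation, together with the fact that $M_1$ is totally geodesic in $\tilde M_1$, propagates the nonnegativity of the sectional curvature down to $M_1$, so that $M_1$ also carries nonnegative Ricci curvature. Bishop's volume comparison theorem combined with the $\mathcal{A}$-superharmonic estimates used in the proof of Corollary \ref{C:11.2} (cf.\ \cite[Corollary 3.3]{WLW}) then implies that $M_1$ is $\mathcal{A}$-parabolic of type $n$. Corollary \ref{C:11.1}(ii), applied to $u_1 : M_1 \to M$ with $M$ a complete totally geodesic $n$-submanifold of ${\bf R}\times_f S$, forces $u_1$ to be constant.

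The main obstacle is essentially bookkeeping: one must be careful to match the type of $\mathcal{A}$-parabolicity to the common dimension of source and target at each stage---type $m$ for the first assertion and type $n$ for the second---so that Corollary \ref{C:11.2} and Corollary \ref{C:11.1}(ii) can be invoked in the appropriate form. Once that is settled, the proof is nothing more than a concatenation of Lemma \ref{L:2.2}, the Gauss equation, and the two corollaries just established.
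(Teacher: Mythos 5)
Your proposal is correct and follows essentially the same route as the paper: by Lemma \ref{L:2.2} the concavity of $f_1$ and the bound $f_1'{}^2\le k$ give ${\bf R}\times_{f_1}S$ nonnegative sectional (hence Ricci) curvature, and the conclusions follow from Corollary \ref{C:11.2}. You are in fact a bit more careful than the paper on the second assertion, where routing through the Gauss equation and Corollary \ref{C:11.1}(ii) (with the type-$n$ parabolicity of $M_1$) is indeed the right way to handle the target being $M$ rather than ${\bf R}\times_f S$.
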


\begin{proof}
By assumption and Lemma \ref{L:2.2}, ${\bf R} \times_{f_{1}} S\, $
has nonnegative sectional curvature. The assertion follows from
Corollary \ref{C:11.2}.
\end{proof}

As noted above, quasiconformal mappings and quasi-isometries are
special cases of quasiregular mappings (in which $\mathbf{K} =
\mathbf{L}^{2(n-1)}$), we have

\begin{corollary} \label{C:11.4} Let $f$ be as in Theorem \ref{T:11.1}.
Then every quasiregular map from ${\mathbb E}^m$ into  ${\bf R}
\times_f S$ is constant.  In particular, there is neither a {\bf
quasi-isometry}  from ${\mathbb E}^m$ into ${\bf R} \times_f S$
whose Jacobian is positive almost everywhere, nor a {\bf
quasiconformal} map from ${\mathbb E}^m$ into ${\bf R} \times_f
S$.
\end{corollary}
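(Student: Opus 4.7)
The plan is to reduce the statement immediately to the two preceding results: Theorem \ref{T:11.1} (together with Corollary \ref{C:11.2}) for quasiregular maps in general, and the embedding of quasiconformal and quasi-isometric maps into the quasiregular class for the ``in particular'' part. No new analytical machinery should be needed.

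First I would observe that ${\mathbb E}^m$ is the flat Euclidean space, and hence a complete Riemannian $m$-manifold of nonnegative (in fact identically zero) Ricci curvature. Since $f$ satisfies the hypotheses of Theorem \ref{T:11.1}, Corollary \ref{C:11.2} applies with $N={\mathbb E}^m$ and yields that every quasiregular map $u:{\mathbb E}^m\to{\bf R}\times_f S$ must be constant. This gives the main conclusion. (Alternatively, one can go through Theorem \ref{T:11.1} directly: by Bishop's volume comparison theorem and the $\mathcal{A}$-superharmonic estimates cited from \cite{WLW}, ${\mathbb E}^m$ is $\mathcal{A}$-parabolic of type $m$, and Theorem \ref{T:11.1} then closes the argument.)

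For the ``in particular'' assertion I would appeal to the remark preceding the corollary. A quasi-isometry with $\mathbf{L}$-bilipschitz differential and $J_u\ge 0$ almost everywhere automatically satisfies $|du_x|^m\le \mathbf{L}^{m}\le \mathbf{L}^{2(m-1)}\,J_u(x)$ (for $m\ge 2$), hence is quasiregular with $\mathbf{K}=\mathbf{L}^{2(m-1)}$; if moreover $J_u>0$ a.e.\ this interpretation is unambiguous. A quasiconformal map is, by definition, a quasiregular homeomorphism. In either case the map falls under the hypothesis of the first assertion and must therefore be constant—which, for a homeomorphism from ${\mathbb E}^m$ or a quasi-isometry with a.e.\ positive Jacobian, is impossible, so no such map exists.

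There is no real obstacle here: the bulk of the work has been done in Theorem \ref{T:11.1}, whose proof already supplied the two key ingredients—a lower curvature bound $\overline{K}\le -a^2$ on ${\bf R}\times_f S$ via Lemma \ref{L:2.2}, and the resulting $m$-hyperbolicity via Troyanov's isoperimetric criterion—while Corollary \ref{C:11.2} supplied the parabolicity side. The only thing to verify carefully is the inclusion of quasi-isometries and quasiconformal maps into the class of quasiregular maps, which is a straightforward unpacking of the definitions \eqref{11.1} and \eqref{11.2}.
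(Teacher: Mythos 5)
Your proposal is correct and follows essentially the same route as the paper: the paper gives no separate proof for this corollary, relying precisely on Corollary \ref{C:11.2} applied to the flat (hence nonnegative-Ricci) space ${\mathbb E}^m$ and on the remark immediately preceding the statement that quasiconformal maps and quasi-isometries are quasiregular with $\mathbf{K}=\mathbf{L}^{2(n-1)}$. Your additional observation that a constant map cannot be a homeomorphism or have a.e.\ positive Jacobian is the right way to read the nonexistence phrasing of the second assertion.
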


\section{A link to manifolds with warped cylindrical ends}

Our previous study can also be linked to manifolds with warped
cylindrical ends.  A manifold $N_1$ is said to \emph {have a
warped cylindrical end} if there exists a compact domain $D
\subset  N_1$ and a compact Riemannian manifold $(K, g_K)$ such
that $N_1\backslash D = (1, \infty) \times _{f_1} K\, ,$ the
warped product of $(1, \infty)$ and $K\, .$ An obvious example is
the Euclidean plane with warping function $f_1(t) = t\, .$ As a
second example, the warped product $I \times _f S\, ,$ where $I =
(1, \infty)$ and $S = S^{m-1}(1)$ is an $(m-1)$-manifold with a
warped cylindrical end, in which $D$ is the empty set, and $f_1 =
f$.

\begin{theorem}\label{T:12.1} Let $N$ be as in Theorem \ref{T:11.1}, and $N_2$ be an $m$-manifold with a warped cylindrical end such that
the warping function $f_2$  satisfies $\int _1^{\infty}\frac
1{f_2(t)} dt < \infty\, ,$ then there does not exist any
nonconstant quasiregular
 mapping $u$ from $N$ into $N_2\, .$  In particular, there is no
 nonconstant $m$-harmonic morphism from $N$ into $N_2\, .$
\end{theorem}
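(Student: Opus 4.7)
The plan is to mirror the proof of Theorem \ref{T:11.1}: reduce to showing that the target $N_2$ is $m$-hyperbolic, and then invoke Theorem \ref{T:11.0} with $\mathcal{A}_1=\mathcal{A}$ (so $N$ is $\mathcal{A}_1$-parabolic by hypothesis) and $\mathcal{A}_2(h)=|h|^{m-2}h$ (the $m$-Laplacian on $N_2$). The extra data now is the warped cylindrical end $E=(1,\infty)\times_{f_2}K$, which is where the integrability assumption on $f_2$ must be put to work.

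First I would build an explicit bounded positive radial $m$-harmonic function on $E$. A direct computation on warped products shows that for $\phi=\phi(t)$,
\begin{equation*}
\Delta_m\phi(t)=\frac{1}{f_2(t)^{m-1}}\frac{d}{dt}\bigl(f_2(t)^{m-1}|\phi'(t)|^{m-2}\phi'(t)\bigr),
\end{equation*}
so the ansatz $\phi(t)=\int_t^{\infty}ds/f_2(s)$ makes $f_2^{m-1}|\phi'|^{m-2}\phi'\equiv(-1)^{m-1}$, hence $\Delta_m\phi\equiv 0$; the hypothesis $\int_1^{\infty}ds/f_2(s)=:L<\infty$ guarantees $\phi$ is strictly positive and bounded above by $L$. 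I would then upgrade this into positivity of the $m$-capacity of the compact part $D\subset N_2$. For any $\varphi\in C_0^\infty(N_2)$ with $\varphi\ge 1$ on $D$, the slice $t\mapsto\varphi(t,x)$ on each fiber of $E$ satisfies $\varphi(1^+,x)\ge 1$ (by continuity from $\partial D$) and vanishes for large $t$, so the fundamental theorem of calculus combined with H\"older's inequality gives
\begin{equation*}
1\le\int_1^{\infty}|\partial_t\varphi|\,dt\le\Bigl(\int_1^{\infty}|\partial_t\varphi|^m f_2^{m-1}\,dt\Bigr)^{1/m}\Bigl(\int_1^{\infty}\tfrac{dt}{f_2(t)}\Bigr)^{(m-1)/m}.
\end{equation*}
Raising to the $m$-th power, integrating over $K$ (so that $f_2^{m-1}dt\,dv_K$ becomes the volume element of $E$), and using $|\partial_t\varphi|\le|\nabla\varphi|$ produces the uniform lower bound $\int_{N_2}|\nabla\varphi|^m\,dV\ge \mathrm{vol}(K)/L^{m-1}>0$. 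By the capacity characterization of hyperbolicity in \cite{HKM}, $N_2$ is $m$-hyperbolic, and Theorem \ref{T:11.0} then finishes the first assertion.

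The second assertion about $m$-harmonic morphisms is a free byproduct: any $m$-harmonic morphism between $m$-manifolds is conformal by Ou--Wang \cite{OW}, hence quasiregular with $\mathbf{K}=1$, exactly as in Corollary \ref{C:11.0.1}. The main technical obstacle is the transversal H\"older step above: one must verify that the continuity of $\varphi$ across $\partial D$ truly yields $\varphi\ge 1$ on the limiting slice $\{1\}\times K$ so the $L^m$-energy estimate can be applied, but this follows because $D$ is closed and its topological boundary is naturally identified with $\{1\}\times K$ under the isometry $N_2\setminus D\cong (1,\infty)\times_{f_2}K$; if extra care were needed one could enlarge $D$ to $D\cup([1,1+\epsilon]\times K)$ without affecting the conclusion.
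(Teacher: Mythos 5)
Your proposal is correct and follows the same overall skeleton as the paper: establish that $N_2$ is $m$-hyperbolic, note that $N$ is $\mathcal{A}$-parabolic of type $m$ by hypothesis, and conclude via Theorem \ref{T:11.0} (with $\mathcal{A}_2(h)=|h|^{m-2}h$) together with the conformality of $m$-harmonic morphisms as in Corollary \ref{C:11.0.1}. The one genuine difference is in how the hyperbolicity of $N_2$ is obtained: the paper simply cites Troyanov's criterion that an $m$-manifold with a warped cylindrical end is $p$-parabolic if and only if $\int_1^\infty f_2(t)^{(m-1)/(1-p)}\,dt=\infty$ (which at $p=m$ reduces exactly to the hypothesis $\int_1^\infty dt/f_2<\infty$ failing the parabolicity test), whereas you reprove the needed direction from scratch via the H\"older/capacity estimate $\int_{N_2}|\nabla\varphi|^m\,dV\ge \mathrm{vol}(K)/L^{m-1}$ for all admissible $\varphi$. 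Your direct argument is sound and makes the proof self-contained; it is also consistent with the capacity characterization of hyperbolicity already used inside the paper's proof of Theorem \ref{T:11.0}. Two small remarks: the constant $f_2^{m-1}|\phi'|^{m-2}\phi'$ for your radial ansatz equals $-1$, not $(-1)^{m-1}$ (immaterial, since any constant gives $\Delta_m\phi=0$), and the explicit $m$-harmonic function $\phi$ is actually redundant once you have the positive-capacity estimate, which by itself certifies $m$-hyperbolicity. Note also that your appeal to Theorem \ref{T:11.0} is, if anything, the logically cleaner citation; the paper's proof nominally invokes Theorem \ref{T:11.1}, but the statement actually being used there is Theorem \ref{T:11.0}.
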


\begin{proof}  According to a Theroem of M. Troyanov \cite {Tr}, an $m$-manifold $N_2$ with a warped cylindrical end is $p$-parabolic
if and only if its warping function $f_2$ satisfies $\int
_1^{\infty}f_2(t)^{\frac {m-1}{1-p}} dt = \infty\, . $ Hence,
$N_2$ is $m$-hyperbolic, and $N$ is $m$-parabolic. Now the
assertion follows from Theorem \ref{T:11.1}, and Corollary
\ref{C:11.0.1}.
\end{proof}

Similarly, we have the following Liouville-type results for
$p$-harmonic morphisms between manifolds with warped cylindrical
ends:

\begin{theorem} \label{T:12.2}  Let $N_i \, (i = 1, 2)$ be an $m_i$-manifold with a warped cylindrical end such that the warping functions
$f_1$ and $f_2$ satisfy $\int_1^{\infty}f_1(t)^{\frac
{m_1-1}{1-p}} dt = \infty\,  $ and $\int _1^{\infty}f_2(t)^{\frac
{m_2-1}{1-p}} dt < \infty\, . $ Then every $p$-harmonic morphism
from $N_1$ into $N_2\,$  is constant.
\end{theorem}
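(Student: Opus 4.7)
The plan is to combine Troyanov's criterion for $p$-parabolicity on warped cylindrical ends with the standard pullback property of $p$-harmonic morphisms, paralleling the proof of Theorem \ref{T:12.1}. First, I would apply Troyanov's theorem (the same result from \cite{Tr} invoked in the proof of Theorem \ref{T:12.1}) to each end separately: since $\int_1^{\infty} f_1(t)^{(m_1-1)/(1-p)}\,dt=\infty$, the manifold $N_1$ is $p$-parabolic, and since $\int_1^{\infty} f_2(t)^{(m_2-1)/(1-p)}\,dt<\infty$, the manifold $N_2$ is $p$-hyperbolic.

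Second, suppose for contradiction that $u\colon N_1\to N_2$ is a nonconstant $p$-harmonic morphism. By $p$-hyperbolicity of $N_2$ there exists a nonconstant positive $p$-superharmonic function $v$ on $N_2$; the truncations $v_j=\min\{v,j\}$ produce bounded nonconstant positive supersolutions of the $p$-harmonic equation on $N_2$ (cf.\ \cite{HKM}, 7.2, 7.20), just as in the proof of Theorem \ref{T:11.0}.

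Third, I would invoke the defining property of a $p$-harmonic morphism: $u$ pulls back $p$-harmonic functions to $p$-harmonic functions on preimages, and a standard comparison-principle argument (identical in spirit to the pullback step for quasiregular mappings in the proof of Theorem \ref{T:11.0}, cf.\ \cite{H}, (2.9a), (2.9b)) promotes this to a pullback of $p$-superharmonic functions to $\mathcal{A}$-superharmonic functions of type $p$ on $N_1$, for an operator $\mathcal{A}$ satisfying the structure conditions \eqref{11.3}--\eqref{11.6}. Hence $v_j\circ u$ is a nonconstant positive $\mathcal{A}$-superharmonic function on $N_1$. Running the exhaustion argument from the proof of Theorem \ref{T:11.0} (Harnack inequality, H\"older estimates, and Arzel\`a--Ascoli, as in \cite{H} and \cite{WLW}) then produces a Green-type function for the standard operator $\mathcal{A}_1(h)\equiv |h|^{p-2}h$ on $N_1$, contradicting the $p$-parabolicity of $N_1$. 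This completes the reduction to Theorem \ref{T:11.0}, with the quasiregular hypothesis there replaced by the $p$-harmonic morphism hypothesis here.

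The main obstacle is the pullback step when $m_1\ne m_2$: a $p$-harmonic morphism between manifolds of different dimensions need not be quasiregular, so Corollary \ref{C:11.0.1} does not apply directly, and one must justify that $v_j\circ u$ is $\mathcal{A}$-superharmonic for an $\mathcal{A}$ obeying \eqref{11.3}--\eqref{11.6}. This is standard in the Fuglede--Loubeau theory of $p$-harmonic morphisms (where the dilation function of $u$ furnishes the required structural bounds on the pulled-back operator), and every other step is a verbatim repetition of the arguments in Theorems \ref{T:11.0} and \ref{T:12.1}.
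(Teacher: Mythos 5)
Your proposal is correct and follows exactly the route the paper intends: the paper gives no written proof of Theorem \ref{T:12.2}, deferring to ``similarly'' after Theorem \ref{T:12.1}, i.e.\ Troyanov's integral criterion makes $N_1$ $p$-parabolic and $N_2$ $p$-hyperbolic, after which a nonconstant $p$-harmonic morphism would transport a nonconstant positive supersolution back to $N_1$, a contradiction. You have in fact been more careful than the paper by isolating and justifying the pullback step for $p$-harmonic morphisms between manifolds of different dimensions, where the quasiregular machinery of Theorem \ref{T:11.0} and Corollary \ref{C:11.0.1} does not apply verbatim.
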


As an obvious example of Theorem \ref{T:12.2}, there does not
exist a nonconstant $p$-harmonic morphism from the Euclidean space
$\mathbb  E^{m_1}$ into $\mathbb E^{m_2}$ for $m_1 \le p < m_2\,.$

In view of the above second example of an $m$-manifold with a
warped cylindrical end,  Theorems \ref{T:11.0}, \ref{T:12.1} and
\ref{T:12.2} yield the following two results.

\begin{corollary}\label{C:12.1} Let $N_i\, $ $(i = 1, 2)\, $ be an $m$-manifold with a warped cylindrical end such that the warping functions $f_1,f_2$ satisfy $\int _1^{\infty}\frac {dt}{f_1(t)} = \infty$ and $\int _1^{\infty}\frac {dt}{f_2(t)} <\infty.$  Let $I = (1, \infty)\, $ and $S = S^{m-1}\, .$ Then
we have:

$(1)$ Every quasiregular
 mapping  (in particular, every $m$-harmonic morphism) from $N_1$ to $I \times _f S$
is constant, whenever $f$ satisfies $\int _1^{\infty}\frac
{dt}{f(t)} < \infty\, .$

$(2)$ Every quasiregular
 mapping  (in particular, every $m$-harmonic morphism) from $I \times _f S$ to $N_2$ is constant, whenever $f$ satisfies $\int _1^{\infty}\frac {dt}{f(t)} =
\infty\, .$

$(3)$ Every quasiregular
 mapping from $N_1$ to $N_2$ is
 constant. In particular, there is no nonconstant $m$-harmonic morphism from $N_1$ to $N_2\, .$
\end{corollary}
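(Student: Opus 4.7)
The plan is to reduce all three assertions to the dichotomy ``$m$-parabolic source, $m$-hyperbolic target'' and then invoke Theorem \ref{T:11.0} (with Corollary \ref{C:11.0.1} for the morphism case). The tool is Troyanov's criterion already recalled in the proof of Theorem \ref{T:12.1}: an $m$-manifold with a warped cylindrical end and warping function $g$ is $p$-parabolic if and only if $\int_1^{\infty} g(t)^{(m-1)/(1-p)}\, dt = \infty$. Specializing at $p=m$ this collapses to the criterion $\int_1^{\infty} dt/g(t) = \infty$.

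First I would observe that $I\times_f S$ itself, with $I=(1,\infty)$ and $S=S^{m-1}$, qualifies as an $m$-manifold with a warped cylindrical end---namely the one with empty compact core $D=\emptyset$ and $K=S$, as recorded in the second example at the start of this section. Thus Troyanov's criterion applies uniformly to $N_1$, $N_2$, and $I\times_f S$. The standing hypotheses $\int_1^{\infty} dt/f_1(t) = \infty$ and $\int_1^{\infty} dt/f_2(t) < \infty$ therefore make $N_1$ an $m$-parabolic manifold and $N_2$ an $m$-hyperbolic one; while the additional hypothesis on $f$ in (1) (resp.\ in (2)) makes $I\times_f S$ $m$-hyperbolic (resp.\ $m$-parabolic).

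With this bookkeeping in hand, each of (1), (2), (3) matches the template of Theorem \ref{T:11.0}---or more economically its special case Proposition \ref{P:11.0}, obtained by choosing $\mathcal{A}_1(h)=\mathcal{A}_2(h)=|h|^{m-2}h$: any quasiregular map between the indicated pair is forced to be constant. In particular, part (1) uses the pair $(N_1,\, I\times_f S)$, part (2) uses $(I\times_f S,\, N_2)$, and part (3) uses $(N_1,\, N_2)$. For the $m$-harmonic morphism statements I would appeal to Corollary \ref{C:11.0.1}, whose proof uses that an $m$-harmonic morphism between $m$-manifolds is conformal and hence quasiregular with $\mathbf{K}=1$.

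The main---indeed essentially the only---point to verify is the applicability of Troyanov's criterion to $I\times_f S$ itself, which is immediate from viewing it as a warped cylindrical end with empty compact core. Beyond that, the corollary is just the combinatorial exhaustion of the three admissible parabolic/hyperbolic pairings among $N_1$, $N_2$, and $I\times_f S$; no genuine obstacle arises, since the heavy machinery (Troyanov's theorem, $\mathcal{A}$-harmonic potential theory, and the conformality of $m$-harmonic morphisms) has already been assembled in the preceding sections.
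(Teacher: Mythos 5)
Your proposal is correct and matches the paper's intended argument: the paper likewise derives this corollary by viewing $I\times_f S$ as an $m$-manifold with a warped cylindrical end (the ``second example''), applying Troyanov's criterion at $p=m$ to sort $N_1$, $N_2$, and $I\times_f S$ into $m$-parabolic and $m$-hyperbolic classes, and then invoking Theorem \ref{T:11.0} (with Corollary \ref{C:11.0.1} for the morphism statements). No gaps.
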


\begin{corollary}\label{C:12.2} Let $N_i\, ,$ and $f_i$ be as in Theorem \ref{T:12.2}, for $i = 1, 2\, .$ Let $I = (1, \infty)\, $ and $S = S^{m-1}\, .$ Then we have:

$(1)$ Every $p$-harmonic morphism  from $N_1$ to $I \times _f S$
is constant, whenever $f$ satisfies $\int
_1^{\infty}f(t)^{\frac{m-1}{1-p}} dt < \infty\, . $

$(2)$ Every $p$-harmonic morphism from $I \times _f S$ to $N_2$ is
constant, whenever $f$ satisfies $\int
_1^{\infty}f(t)^{\frac{m-1}{1-p}} dt = \infty\, . $
\end{corollary}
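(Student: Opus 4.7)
The plan is to realize that $I \times_f S^{m-1}$ is itself an $m$-manifold with a warped cylindrical end (taking the compact domain $D$ to be empty, as noted in the second example preceding Theorem \ref{T:12.1}), and then invoke Theorem \ref{T:12.2} after identifying which side is $p$-parabolic and which is $p$-hyperbolic in each of the two assertions.

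For assertion $(1)$, I would set $N_2' := I \times_f S$ in the role of the target. The hypothesis on $f_1$ (namely $\int_1^{\infty} f_1(t)^{(m_1-1)/(1-p)}\,dt = \infty$, inherited from Theorem \ref{T:12.2}) makes $N_1$ a $p$-parabolic $m_1$-manifold by Troyanov's characterization quoted in the proof of Theorem \ref{T:12.1}. The hypothesis on $f$ in $(1)$, $\int_1^{\infty} f(t)^{(m-1)/(1-p)}\,dt < \infty$, makes $N_2' = I \times_f S$ a $p$-hyperbolic $m$-manifold with warped cylindrical end by the same Troyanov criterion. Applying Theorem \ref{T:12.2} with this choice of $(N_1, N_2')$ yields that every $p$-harmonic morphism $N_1 \to I \times_f S$ is constant.

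For assertion $(2)$, I would symmetrically take $N_1' := I \times_f S$ as the source. The condition $\int_1^{\infty} f(t)^{(m-1)/(1-p)}\,dt = \infty$ together with Troyanov's theorem shows that $N_1' = I \times_f S$ is $p$-parabolic, while the standing hypothesis on $f_2$ (from Theorem \ref{T:12.2}) forces $N_2$ to be $p$-hyperbolic. Theorem \ref{T:12.2} applied to $(N_1', N_2)$ then gives the conclusion.

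There is essentially no obstacle beyond bookkeeping: the entire content of the corollary is the observation that the warped product $I \times_f S^{m-1}$ fits verbatim into the framework of manifolds with a warped cylindrical end, so its $p$-parabolicity or $p$-hyperbolicity is governed by the very same integral condition $\int_1^{\infty} f(t)^{(m-1)/(1-p)}\,dt$. Once that identification is made, each part is immediate from Theorem \ref{T:12.2}, so the proof will be only a few lines.
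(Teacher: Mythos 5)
Your proposal is correct and matches the paper's own (implicit) argument: the paper derives this corollary precisely by noting that $I\times_f S^{m-1}$ with $I=(1,\infty)$ is itself an $m$-manifold with a warped cylindrical end (empty compact part, warping function $f$), so each part is an instance of Theorem \ref{T:12.2} with the appropriate side played by $I\times_f S$. The only cosmetic difference is that you route through Troyanov's parabolicity criterion explicitly, whereas the hypotheses of Theorem \ref{T:12.2} are already stated as the integral conditions you verify.
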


\section{Classification of complete stable minimal surfaces in
${\bf R}\times_f S^{2}(k)\, $ }

\begin{theorem} \label{T:3.1}
Let  $M$ be a stable minimal surface of $R \times_f S\, ,$ where
$f$ is a positive $C^2$ concave function with bounded derivative $
|f'| \le {\sqrt{k}}$ on $R$. Then $M$ is totally geodesic.
Furthermore,

$(a)$ If $(\ln f)''+k/f^2=0$ on $R$, then $M$ is a plane.

$(b)$ If $\, (\ln f)''+k/f^2\ne 0$ on $R$, then one of the
following two cases occur

$(b.1)$ $M$ is a transverse submanifold which is a slice $S(t_0)$
with $f'(t_0)=0$ as a totally geodesic submanifold of $R \times_f
S$; or

$(b.2)$  $M$ is an $\mathcal H$-submanifold which is locally the
warped product $ I\times_{ f} N^{1}$ of $I$ and a geodesic $N^{1}$
of  $S$.

Moreover, if  case $(b.1)$ occurs, then $Sec(X) =
\frac{(k-f'{}^2)}{f^2}\<X,X\>, X \in \Gamma(TM).$
 \end{theorem}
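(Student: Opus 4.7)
The plan is to first establish that the ambient $\tilde M=\mathbf{R}\times_f S^{2}(k)$ has nonnegative sectional curvature. By Lemma \ref{L:2.2}, for orthonormal vertical $X,Y$ one has $K^{\tilde M}(X,Y)=(k-f'{}^{2})/f^{2}\ge 0$ (since $|f'|^{2}\le k$), while $K^{\tilde M}(\partial_{t},X)=-f''/f\ge 0$ (since $f$ is concave). Since $\dim\tilde M=3$, a short computation using the $3$-manifold identity $K^{\tilde M}(P)=\tfrac{1}{2}\tau^{\tilde M}-Ric^{\tilde M}(\nu)$ shows that every $2$-plane $P$ with unit normal $\nu=c\partial_{t}+W$ ($W$ vertical, $c^{2}+|W|^{2}=1$) satisfies
\[
K^{\tilde M}(P)=(1-c^{2})(-f''/f)+c^{2}(k-f'{}^{2})/f^{2}\ge 0;
\]
in particular $Ric^{\tilde M}(\nu)\ge 0$ for every unit normal to $M$.

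Next I would apply the stability inequality \r{2.20}. Since $\tilde M$ has nonnegative scalar curvature and $M$ is a complete stable minimal surface, the Fischer--Colbrie theorem shows that $M$ is conformally equivalent to $\mathbf C$ or $\mathbf C^{*}$, hence parabolic. Choosing cutoffs $\phi_{n}\in C_{0}^{\infty}(M)$ with $\phi_{n}\nearrow 1$ pointwise and $\int_{M}|\nabla\phi_{n}|^{2}\,dv\to 0$, and noting that the integrand on the left side of \r{2.20} is pointwise nonnegative, passage to the limit forces $|A|\equiv 0$ and $Ric^{\tilde M}(\nu)\equiv 0$ on $M$. In particular, $M$ is totally geodesic in $\tilde M$.

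Finally I would classify $M$ via Theorem \ref{T:6.1}. In case (b), the assumption $(\ln f)''+k/f^{2}\not\equiv 0$ on $R$ means the two extreme sectional curvatures $-f''/f$ and $(k-f'{}^{2})/f^{2}$ disagree on an open set, so $\tilde M$ contains no open subset of constant curvature and Theorem \ref{T:6.1} applies. For the totally geodesic $M$ with $n=2$: alternative (1) forces $M=S(t_{0})$ with $f'(t_{0})=0$ (required for the slice to be totally geodesic in $\tilde M$), giving (b.1); alternative (2) forces $M$ locally isometric to $I\times_{f}N^{1}$ with $N^{1}\subset S$ a geodesic (the total geodesy of $M$ descends to $N^{1}$, and in dimension $1$ this is just a geodesic), giving (b.2). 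The final sectional curvature identity in (b.1) is immediate, since $M=S(t_{0})$ carries the induced metric $f^{2}(t_{0})g_{k}$ of constant Gauss curvature $(k-f'{}^{2}(t_{0}))/f^{2}(t_{0})$. In case (a), $(\ln f)''+k/f^{2}\equiv 0$ forces the two extreme sectional curvatures to coincide at each point; Schur's theorem ($\dim\tilde M\ge 3$) then promotes this to a globally constant sectional curvature $c\ge 0$ on $\tilde M$. The ODE $f''=-cf$ combined with $f>0$ on all of $\mathbf R$ rules out $c>0$ (nontrivial sinusoidal solutions vanish), so $c=0$, whence $f$ is constant, $k=0$, and $\tilde M=\mathbf R^{3}$; the totally geodesic $M$ is then a plane. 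The principal obstacle is the parabolicity step, which rests on the nontrivial Fischer--Colbrie conformal type theorem; a self-contained substitute would derive quadratic area growth of $M$ from the nonnegative Ricci curvature of $\tilde M$ and invoke Ahlfors' parabolicity criterion.
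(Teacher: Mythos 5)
Your argument is correct in substance and reaches all the stated conclusions, but it differs from the paper's proof at two key junctures. First, where you obtain parabolicity of $M$ by quoting the Fischer--Colbrie--Schoen conformal type theorem (and then kill $|A|$ with capacity-zero cutoffs), the paper instead runs a self-contained dichotomy from the Uniformization Theorem: in the parabolic case it uses harmonic cutoffs exactly as you do, but in the hyperbolic case it does \emph{not} invoke \cite{FS} as a black box --- it substitutes $\phi=f\vp$ (with $f$ the conformal factor of the disk model) into the stability inequality \r{4.2}, integrates by parts, and contradicts completeness of the metric $\frac{1}{f^2}|dz|^2$. Your route is shorter but rests on a deep external theorem; the paper's buys self-containment at the cost of the extra computation \r{4.4}--\r{4.9}. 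Second, for the classification of the totally geodesic $M$, you pass through Theorem \ref{T:6.1} (totally geodesic $\Rightarrow$ parallel, then refine the two alternatives), whereas the paper reads off cases $(a)$, $(b.1)$, $(b.2)$ from the equality discussion in Proposition \ref{P:2.1}; your treatment of case $(a)$ via Schur plus the ODE $f''=-cf$ (forcing $c=0$, $f$ constant, $k=0$, $\tilde M=\mathbf{R}^3$) is actually tighter than the paper's, which concludes $M$ is flat by observing that a totally geodesic sphere would be unstable. Two small points to tighten: the hypothesis ``$(\ln f)''+k/f^2\ne 0$ on $R$'' must be read as \emph{nowhere} zero for Theorem \ref{T:6.1} to apply (your phrase ``disagree on an open set'' does not by itself exclude some other open set of constant curvature); and the Fischer--Colbrie--Schoen theorem requires $M$ two-sided and complete, which you should state explicitly (two-sidedness is already implicit in the form \r{2.20} of the stability inequality).
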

To be self-contained, we provide the following complete
\begin{proof}
By virtue of the assumption $f'' \le 0$, $ |f'| \le {\sqrt{k}}$ on
$R$, and Lemma \ref{L:2.2} , $\tilde M = {\bf R} \times_f S$ is a
complete simply-connected manifold with sectional curvature
$\tilde K \ge 0\, ,$ and $Ric^{\tilde M} \ge 0\, .$

Since $M$ is a minimal surface with Guass curvature $K$ in $\tilde
M$, it follows from the Guass curvature equation that

\begin{equation}\begin{aligned}\label{4.1} 0 & \le \tilde K \\ &  = K - h_{11}h_{22} + h_{12}^2
\\& = K + h_{11}^2 + h_{12}^2
\\& = K + \frac 12 |A|^2.\end{aligned}\end{equation} Hence,
the stability inequality \r{2.20} and \r{4.1} imply that for every
$\varphi \in C_0^{\infty}(M)\, ,$

\begin{equation}\begin{aligned}\label{4.2} -2 \int _M K \phi^2 dv & \le \int _M  |A|^2 \phi^2 dv \\
& \le \int _M (Ric^{\tilde M} (\nu) + |A|^2 )\phi^2 dv \\
& \le \int _M |\nabla \phi|^2 dv,
\end{aligned}\end{equation}

(Following \cite {W2}) Firstly, we claim if  $M$ is conformally
equivalent to the plane or equivalent if $M$ is parabolic(, i.e.
there does not exist a positive superharmonic function unless it
is a constant), then $M$ is totally geodesic:  Proceed as in \cite
[p.152-153]{W2}(in which $b = |A|^2 \phi^2$, and $c_1 =1$).

For any fixed compact set $K$ in $M$, choose a sufficiently large
$r>0$ so that the ball $B_r$ of radius $r$ covers $K$ and pick
$$\vp_r=\begin{cases}
1 &\qquad \text{on}\ K\\
0 &\qquad \text{on}\ \partial B_r\\
\text{harmonic} &\qquad \text {in}\ B_r\backslash
K\,.\end{cases}$$ Set $\phi=\vp_r$ in \r{4.2}.  Since
$\Delta\vp_r=0$ in $M\backslash K$, $\vp_r=0$ on $\partial B_r$
and by divergence theorem
\begin{equation}\begin{aligned}\label{4.3}
\int_K |A|^2 \phi^2 dv&\leq c_1\int_{B_r\backslash K}|\na\vp_r|^2 dv\\
&=c_1\int_{B_r\backslash K} div(\vp_r\na\vp_r) dv\\
&=c_1\int_{\partial K}\frac{\partial \vp_r}{\partial n} ds\\
&=c_1\int_{\Sigma}\frac{\partial \vp_r}{\partial n}
ds\end{aligned}\end{equation} where $\Sigma$ is a hypersurface
between $\partial K$ and $B_r$ and $n$ is the unit outer normal
vector.

The last step follows from the harmonicity of $\vp_r$ between
$\partial K$ and $\Sigma$.  By the maximum principle
$0\leq\vp_r\leq 1$ and $\vp_r$ increases as $r$ increases.  Then
$\vp_r$ converges to a constant function $\vp_\infty\equiv 1$.
Otherwise $\vp_\infty$ would be a nonconstant positive
superharmonic function on $M$, a contradiction to the
parabolicity.

By an interior elliptic estimate \cite{GT}, $\na\vp_r\to 0$
uniformly on compact subsets of $M\backslash K$ as $r$ tends to
$\infty$. It follows from \r{4.3}  that $A\equiv 0$ and hence $M$
is totally geodesic.

Secondly, we claim if $M$ is conformally equivalent to the unit
disk $D$ endowed with the complete metric $\frac 1{f(z)^2}|dz|^2$,
or equivalent if $M$ is hyperbolic(, i.e., $M$ is not parabolic),
then $M$ is not a stable minimal submanifold of $\tilde{M}$:
Proceed as in \cite [p.154-155]{W2} in which $c_1 = c_2 =1$ (cf.
\cite{FS}). Suppose on the contrary, then by a well-known formula,
its Gaussian curvature $K$ is given by
\begin{equation}\label{4.4}K=\frac{\Delta f}f-\frac{|\na f|^2}{f^2}\end{equation}
where $\Delta$ is the Beltrami-Laplace operator on $M$.  And
\r{4.2} implies that for every $\phi\in C^\infty_0(M)$
\begin{equation}\label{4.5}-2\int_M\phi^2\left(\frac{\Delta f}f-\frac{|\na f|^2}{f^2}\right) dv\leq
\int_M|\na\phi|^2 dv\,.\end{equation} Substituting $\phi=f\vp$
into \r{4.5} for every $\vp\in C^\infty_0(M)$, we have
\begin{equation}\label{4.6}-2\int_M f\vp^2\Delta f-|\na f|^2\vp^2 dv\leq
\int_M|\na f|^2\vp^2+|\na\vp|^2f^2+2f\vp\na f\na\vp
dv\,.\end{equation} Integration by parts and Cauchy-Schwarz
inequality yield \begin{equation}\begin{aligned}\label{4.7}
3\int_M|\na f|^2\vp^2 dv & \leq \int_M|\na\vp|^2f^2 dv -
\int_M2f\vp\na f\na\vp dv\\
& \leq 2\int_M|\na\vp|^2f^2 dv+ \int_M\vp^2|\na f|^2
dv\,.\end{aligned}\end{equation} Then \r{4.7} implies
\begin{equation}\label{4.8}\int_M|\na f|^2\vp^2 dv\leq \int_Mf^2|\na\vp|^2 dv\,.\end{equation}
Choose a cut-off function $\vp$ with compact support in $B_r$ with
$|\na\vp|\leq\frac{c}r$ and $\vp\equiv 1$ on $B_{\frac r2}$ and
 use the fact that $dv=\frac 1{f^2}dxdy$
\begin{equation}\begin{aligned}\label{4.9}
\int_{B_{\frac r2}}|\na f|^2 dv&\leq\frac{c^2}{r^2}\int_Mf^2 dv\\
&=\frac{c^2}{r^2}\int_Ddxdy\\
&=\frac{c^2\pi}{r^2}\end{aligned}\end{equation} which tends to $0$
as $r$ tends to $\infty$.  Hence $f$ is a constant on $M$,
contradicting the completeness of the metric $\frac
1{f(z)^2}|dz|^2$.

It follows from the Uniformization Theorem that $M$ is totally
geodesic. In view of Proposition \ref{P:2.1} that the assertion
($b$) follows if $(\ln f)''+k/f^2\ne 0$ on $R$; and $M$ is a
totally geodesic submanifold of nonnegative constant curvature
$(k-f'{}^2)/f^2$, if $(\ln f)''+k/f^2= 0$ on $R$.  In the latter
case, $M$ has to be flat or $M$ is a plane, since a totally
geodesic submanifold $M$ of a sphere (which has positive constant
curvature) is not stable. This completes the proof.
\end{proof}

 When $f = Const\, ,$ and $k=0\, ,$ this result recaptures the following theorem
 of Colbrie-Fisher - Schoen \cite {FS},
do Carmo - Peng \cite {DP}, and Pogorelov \cite {P}:

\begin{theorem}\label{T:7.2}
Every complete stable minimal surface in $R^3$ is a plane.
 \end{theorem}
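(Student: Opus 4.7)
The plan is to obtain Theorem \ref{T:7.2} as an immediate specialization of Theorem \ref{T:3.1}. The key observation is that Euclidean three-space $\mathbf{R}^3$ is isometric to the warped product $\mathbf{R} \times_f S^2(k)$ with constant warping function $f \equiv 1$ and fiber curvature $k = 0$, since $S^2(0)$ is the Euclidean plane and a constant warping produces the Riemannian product.

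First I would verify that the hypotheses of Theorem \ref{T:3.1} are met in this setting: the constant function $f \equiv 1$ is a positive $C^2$ function with $f'' \equiv 0$, hence (weakly) concave, and $|f'| \equiv 0 \le 0 = \sqrt{k}$, so the derivative bound $|f'| \le \sqrt{k}$ holds with equality at the boundary of the hypothesis. Therefore, if $M$ is a complete stable minimal surface in $\mathbf{R}^3$, Theorem \ref{T:3.1} asserts that $M$ is totally geodesic in $\mathbf{R}^3$.

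Next I would select the correct sub-case of the conclusion of Theorem \ref{T:3.1}. Since $(\ln f)'' + k/f^2 = 0 + 0 = 0$ on $\mathbf{R}$, it is case $(a)$ that applies, and this directly gives that $M$ is a plane, which is exactly the assertion of Theorem \ref{T:7.2}.

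The main ``obstacle'' is essentially absent: all of the analytic content --- the Gauss equation coupled with the stability inequality, the parabolic-versus-hyperbolic dichotomy via the Uniformization Theorem, the harmonic-cutoff argument handling the parabolic branch, and the $\phi = f \varphi$ test-function argument ruling out the hyperbolic branch --- has been fully absorbed into the proof of Theorem \ref{T:3.1}. The only thing one needs to check is that the specialization $f \equiv \text{Const}$, $k=0$ lies within the hypotheses (in the borderline case $|f'| = \sqrt{k} = 0$) and falls into the correct sub-case $(a)$, both of which are immediate. In this way Theorem \ref{T:7.2} is recovered as a genuine corollary of the more general classification.
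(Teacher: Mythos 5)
Your proposal is correct and matches the paper's own derivation exactly: the authors obtain Theorem \ref{T:7.2} precisely by specializing Theorem \ref{T:3.1} to $f\equiv\text{Const}$ and $k=0$, so that ${\bf R}\times_f S^2(0)$ is ${\bf R}^3$ and case $(a)$ gives that $M$ is a plane. Your check that the borderline hypotheses $f''\le 0$ and $|f'|\le\sqrt{k}$ are satisfied (both with equality) is the only verification needed, and you carried it out correctly.
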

 Which implies the Classical Bernsten Theorem \cite {B}:

 \begin{theorem}\label{T:7.3}
Every entire solution to the Minimal Surface Equation
$$ div\(\frac {\na f}{\sqrt{1 + |\na f|^2}}\) = 0 $$ on $R^2$ is an affine
function.
 \end{theorem}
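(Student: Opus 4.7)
The plan is to reduce Theorem \ref{T:7.3} to Theorem \ref{T:7.2} via the graph construction. Given an entire solution $f\colon R^2 \to R$ of the minimal surface equation, I would associate the graph
\[
M \; := \; \{(x,y,f(x,y)) : (x,y) \in R^2\} \; \subset \; R^3
\]
and verify that $M$ is a complete stable minimal surface in $R^3$. Once this is done, Theorem \ref{T:7.2} identifies $M$ as a plane, and a plane which is simultaneously the graph of $f$ has the form $\{z=ax+by+c\}$, whence $f$ is affine.

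Minimality of $M$ is the standard restatement: the equation $\mathrm{div}(\nabla f/\sqrt{1+|\nabla f|^2})=0$ is exactly $H\equiv 0$ for the graph. Completeness of $M$ I would obtain from the vertical projection $\pi\colon M \to R^2$: for any $v\in T_pM$ one has $|d\pi(v)|^2 = |v|^2 - \langle v,\partial_z\rangle^2 \le |v|^2$, so $\pi$ is $1$-Lipschitz in the graph metric. A unit-speed geodesic $\gamma\colon[0,T)\to M$ of finite length $T$ therefore projects to a curve of length $\le T$ in the complete manifold $R^2$; this curve has a limit $(x_0,y_0)\in R^2$, and hence $\gamma(t)\to(x_0,y_0,f(x_0,y_0))\in M$ as $t\to T$, so $\gamma$ extends past $T$.

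The substantive step is stability, and I would invoke one of two classical arguments. The first is calibration: the $2$-form on $R^3$ obtained by pulling back the area form of $R^2$ along the vertical projection $(x,y,z)\mapsto(x,y)$ is closed and has comass $\le 1$, and its restriction to $M$ coincides with the area form of $M$; hence $M$ minimizes area and a fortiori is stable. Alternatively, one exhibits a positive Jacobi field: the Killing vector field $\partial_z$ on $R^3$ produces the strictly positive function $u:=\langle\partial_z,\nu\rangle = 1/\sqrt{1+|\nabla f|^2}$ on $M$, which satisfies $\Delta_M u + |A|^2 u = 0$, so by the Fischer--Colbrie--Schoen criterion the stability inequality \eqref{2.20} is valid on $M$. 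I expect stability to be the only point requiring a genuine classical input; minimality and completeness are bookkeeping, and the reduction to Theorem \ref{T:7.2} is immediate once the three properties of $M$ are in hand.
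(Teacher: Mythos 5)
Your reduction is exactly the one the paper intends: Theorem \ref{T:7.3} is stated as an immediate consequence of Theorem \ref{T:7.2}, obtained by viewing the graph of $f$ as a complete stable minimal surface in $R^3$, and your verifications of minimality and completeness (via the $1$-Lipschitz vertical projection, or equivalently the inequality $g_M \ge dx^2+dy^2$ of induced metrics) are correct. One caveat on the stability step: the calibration you describe is not the right form. The pullback $\pi^*(dx\wedge dy)=dx\wedge dy$ is closed with comass $1$, but its restriction to $M$ equals $\langle \nu, \partial_z\rangle\, dA_M = (1+|\nabla f|^2)^{-1/2}\, dA_M$, which is strictly less than the area form wherever $\nabla f\neq 0$; it calibrates horizontal planes, not $M$. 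The correct calibration is $\omega=\iota_N(dx\wedge dy\wedge dz)$, where $N$ is the unit normal $(-\nabla f,1)/\sqrt{1+|\nabla f|^2}$ extended to $R^3$ independently of $z$; then $d\omega=0$ is precisely the minimal surface equation, $\omega$ has comass $1$, and $\omega|_M=dA_M$. Your second argument --- the positive Jacobi field $u=\langle\partial_z,\nu\rangle=1/\sqrt{1+|\nabla f|^2}$ coming from vertical translations, together with the Fischer--Colbrie--Schoen equivalence --- is correct as stated and suffices to establish the stability inequality \eqref{2.20}, so the proof goes through.
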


\section{Classification of parabolic stable minimal hypersurfaces in
${\bf R}\times_f S^{n}(k)\, $ }

Utilizing the same technique in the last section, we obtain
\begin{theorem} \label{T:3.4} Let
$M$ be a parabolic stable minimal hypersurface in warped product
Riemannian manifolds ${\bf R}\times_f S^{n}(k)\, , $ where $f$ is
as in Theorem \ref{T:3.1}. Then $M$ is totally geodesic.
Furthermore,

$(a)$ If $(\ln f)''+k/f^2=0$ on $R$,  then $M$ is a hyperplane.

$(b)$ If $\, (\ln f)''+k/f^2\ne 0$ on $R$, then  one of the
following two cases occur

$(b.1)$ $M$ a transverse submanifold which lies in a slice
$S(t_0)$ with $f'(t_0)=0$ as a totally geodesic submanifold;

$(b.2)$  $M$ is an $\mathcal H$-submanifold which is locally the
warped product $ I\times_{ f} N^{n-1}$ of $I$ and a totally
geodesic submanifold   $N^{n-1}$ of  $S$.

Furthermore, if  case $(b.1)$ occurs, then $Ric(X) =
\frac{(n-1)(k-f'{}^2)}{f^2}\<X,X\>, X \in \Gamma(TM).$
 \end{theorem}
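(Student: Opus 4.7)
The strategy is to adapt the parabolic branch of the proof of Theorem \ref{T:3.1} to arbitrary dimension; the hyperbolic branch is now unnecessary because parabolicity of $M$ is assumed. Concretely, the plan has three phases: establishing that the ambient $\tilde M = {\bf R}\times_f S^n(k)$ has nonnegative sectional curvature, exploiting parabolicity to force $A\equiv 0$, and then invoking Proposition \ref{P:2.1} to read off the structural dichotomy.

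First, I would use Lemma \ref{L:2.2} together with the hypotheses $f''\le 0$ and $|f'|\le \sqrt{k}$ to conclude that every sectional curvature of $\tilde M$ is nonnegative: curvatures of planes containing $\partial_t$ are proportional to $-f''/f\ge 0$, and curvatures of planes tangent to a slice are proportional to $(k-f'{}^2)/f^2\ge 0$. In particular $\mathrm{Ric}^{\tilde M}(\nu)\ge 0$ for the unit normal $\nu$ of $M$. Substituting this into the stability inequality \eqref{2.20} gives
\begin{equation*}
\int_M |A|^2\phi^2\, dv \;\le\; \int_M \bigl(\mathrm{Ric}^{\tilde M}(\nu)+|A|^2\bigr)\phi^2\, dv \;\le\; \int_M |\nabla \phi|^2\, dv
\end{equation*}
for every $\phi\in C_0^\infty(M)$. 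This is precisely the inequality that drives the parabolic half of the surface argument, so the reduction to higher-dimensional hypersurfaces costs nothing here.

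Next, I would deploy the parabolicity of $M$ exactly as on p.~152--153 of \cite{W2}: fix a compact $K\subset M$, pick $r$ large, and take $\varphi_r$ to equal $1$ on $K$, $0$ on $\partial B_r$, and to be harmonic on $B_r\setminus K$. The maximum principle and the monotonicity of $\varphi_r$ in $r$, combined with parabolicity, force $\varphi_r\nearrow 1$ (else the pointwise limit would be a nonconstant positive superharmonic function). Setting $\phi=\varphi_r$ in the displayed inequality and using the divergence theorem turn the right-hand side into $\int_{\partial K}\partial_n\varphi_r\, dS$, and interior elliptic estimates (see \cite{GT}) give $\nabla\varphi_r\to 0$ uniformly on compact subsets of $M\setminus K$. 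Letting $r\to\infty$ yields $\int_K|A|^2\, dv=0$; since $K$ was arbitrary, $A\equiv 0$ on $M$, i.e., $M$ is totally geodesic.

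Finally, with $M$ totally geodesic and minimal, I would invoke Proposition \ref{P:2.1}. If $(\ln f)''+k/f^2\ne 0$, part (b) yields the alternative (b.1) or (b.2) as stated, together with the Ricci identity displayed in the theorem. If $(\ln f)''+k/f^2= 0$, part (a) says $M$ has constant sectional curvature $(k-f'{}^2)/f^2\ge 0$; since a totally geodesic submanifold of constant positive curvature in $\tilde M$ would be closed and cannot support the stability-plus-parabolicity picture (exactly as the sphere was ruled out in the proof of Theorem \ref{T:3.1}), the curvature is forced to be $0$, and the complete flat totally geodesic hypersurface is a hyperplane. The only delicate point, and the one I would treat most carefully, is this last step: ruling out positive constant curvature in case (a) and justifying the passage from ``complete flat totally geodesic'' to ``hyperplane'' in the ambient warped product; everything else is a dimension-free transcription of the argument of Theorem \ref{T:3.1}.
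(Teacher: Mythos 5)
Your proposal is correct and follows essentially the same route as the paper, whose proof simply says to run the first (parabolic/capacity) part of the proof of Theorem \ref{T:3.1} to get $A\equiv 0$ and then apply Proposition \ref{P:2.1}. You have merely made explicit the two points the paper leaves implicit --- that the capacity argument is dimension-free once $\mathrm{Ric}^{\tilde M}(\nu)\ge 0$ is in hand, and that in case $(a)$ the positive-curvature alternative must be excluded exactly as in Theorem \ref{T:3.1} --- so no further comment is needed.
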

\begin{proof}
Proceed as in the first part of the proof of Theorem \ref{T:3.1},
$M$ is totally geodesic, and Proposition \ref{P:2.1} completes the
proof.
\end{proof}

\section{$p$-hyperbolic manifolds and stable minimal hypersurfaces in
${\bf R}\times_f S^{n}(k)\, $ }

In the course of proving Theorem \ref{T:11.1}, one has shown the
case $p > 1$ for the following

\begin{proposition}\label{P:11.1}
Every complete, simply-connected, manifold with sectional
curvature bounded above by a negative constant is $p$-hyperbolic
for all $p \ge 1\, .$ In particular, every $n$-dimensional
hyperbolic space $\mathbb{H}^n$ is $p$-hyperbolic for all $p \ge
1$.
\end{proposition}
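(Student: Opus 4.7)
The plan is to abstract the argument that already appears inside the proof of Theorem \ref{T:11.1}: the only features of ${\bf R}\times_f S$ used there were completeness, simple connectivity, and an upper sectional curvature bound $\leq -a^2$. I would take $\tilde{M}$ to be an $n$-dimensional complete simply-connected Riemannian manifold with sectional curvature $\leq -a^2 < 0$, fix $x_0 \in \tilde M$, and let $r(x) = dist(x_0,x)$. The Hessian Comparison Theorem gives $\Delta r \ge (n-1)\,a\coth(ar) \ge (n-1)a$ away from $x_0$, whence Gauss' lemma and Stokes' theorem yield the linear isoperimetric inequality
$$ {\rm Area}(\partial \Omega) \;\ge\; \int_{\Omega} \Delta r\, dV \;\ge\; (n-1)a\,{\rm vol}(\Omega)$$
for any relatively compact $\Omega \subset \tilde M$ with smooth boundary, exactly as in \r{11.8}--\r{11.9}.

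Next I would define
$$\Psi(t) = \inf\{{\rm Area}(\partial \Omega) : \Omega \subset\subset \tilde M,\ \partial \Omega \in C^\infty,\ {\rm vol}(\Omega) \ge t\}$$
and observe $\Psi(t) \ge (n-1)a\,t$, so that for any $p>1$ (where $p/(p-1) > 1$)
$$\int_{t_0}^{\infty} \frac{dt}{\Psi(t)^{p/(p-1)}} \;\le\; \frac{1}{((n-1)a)^{p/(p-1)}}\int_{t_0}^{\infty} \frac{dt}{t^{p/(p-1)}} \;<\; \infty.$$
Troyanov's theorem \cite{Tr} then produces a nonconstant positive supersolution of the $p$-harmonic equation on $\tilde M$, so $\tilde M$ is $p$-hyperbolic. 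This is precisely the $p > 1$ case already alluded to in the statement.

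For the remaining endpoint $p = 1$, I would invoke the fact that the linear isoperimetric inequality is equivalent to a strictly positive Cheeger constant, $h(\tilde M) \ge (n-1)a > 0$. By a standard construction (exhausting $\tilde M$ by Cheeger-type $1$-capacitors whose capacities stay bounded below, or via the accompanying spectral estimate $\lambda_1(\tilde M) \ge h(\tilde M)^2/4 > 0$), this yields a nonconstant positive $1$-superharmonic function and hence $1$-hyperbolicity. The final sentence about $\mathbb{H}^n$ is then immediate by specializing to $a = 1$.

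The main --- and essentially only --- obstacle I expect is cosmetic: the paper's framework of $\mathcal{A}$-parabolicity/hyperbolicity is formulated under the structural conditions \r{11.3}--\r{11.6} which require $1 < p < \infty$, so the borderline case $p = 1$ cannot be treated by a direct application of Troyanov's criterion and must be handled via the Cheeger-constant route. All remaining steps are a direct transcription of the Hessian-comparison and linear-isoperimetric estimate already carried out within the proof of Theorem \ref{T:11.1}.
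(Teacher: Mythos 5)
Your argument is essentially the paper's: the $p>1$ case is exactly the Hessian-comparison/linear-isoperimetric/Troyanov argument already carried out in the proof of Theorem \ref{T:11.1} (which is all the paper's proof appeals to), and for $p=1$ the paper simply cites Troyanov \cite{Tr}, whose criterion is precisely the positive linear isoperimetric (Cheeger) condition you invoke, since by the coarea formula it forces the $1$-capacity of any closed ball to be bounded below by $(n-1)a\,{\rm vol}(B)>0$. One small caution: the spectral bound $\lambda_1 \ge h^2/4$ you offer as an alternative route concerns the $2$-Laplacian and does not by itself produce a nonconstant positive $1$-superharmonic function, so you should rely on the capacity/coarea argument, which the linear isoperimetric inequality supplies directly.
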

\begin{proof}
For the case $p=1$, this follows from \cite [p.139]{Tr}.
\end{proof}

Let $\mathbb{B}^n = \{(x_1, \cdots, x_n)\in \mathbb{R}^n : x_1^2 +
\cdots + x_n^2 < 1\}$ be the unit $n$-ball. Assume that the
hyperbolic space $\mathbb{H}^n$ is modeled on the Euclidean unit
$n$-ball \big($\mathbb{B}^n,
\frac{4}{(1-|\mathbf{x}|^2)^2}d\mathbf{x}^n$\big) where
$d\mathbf{x}^n$
 is Euclidean metric and $\mathbf{x} = (x_1,\cdots,x_n)$. By proposition \ref{P:11.1}, $\mathbb{H}^n$ is $p$-hyperbolic for all $p \ge1$. We
 remark that by proceeding
exactly as in the proof of \cite[Theorem 1.3]{LWe}, one can prove
that every complete manifold $M$ that is conformally equivalent to
the unit $n$-ball $\mathbb{B}^n$ cannot be stably minimally
immersed in ${\bf R}\times_f S^{n}(k)\, ,$ where $f(x)=\sqrt {k} x
+ b\, ,$ for any constants $k \ge 0$ and $b$. This is precisely
the nonexistence theorem in $R^{n+1}$\cite [Theorem 1.3] {LWe},
since by Lemma \ref{L:2.2}, and Cartan-Ambrose-Hicks Theorem, such
${\bf R}\times_f S^{n}(k)\, $ is isometric to $R^{n+1}\, .$

\vskip 1cm

\noindent {\small Department of Mathematics,}

\noindent {\small  Michigan State University, }

\noindent {\small  East Lansing, Michigan 48824-1027,}

\noindent {\small  U.S.A. }

\noindent {\small {\it Email address:} bychen@math.msu.edu}
\smallskip

\noindent {\small Department of Mathematics,}

\noindent {\small  University of Oklahoma, }

\noindent {\small  Norman, Oklahoma 73019-0315,}

\noindent {\small  U.S.A. }

\noindent {\small {\it Email address:} wwei@ou.edu}

\end{document}